\newlength{\numone}
\newlength{\widone}
\newlength{\numtwo}
\newlength{\widtwo}
\newtheorem{thm}{Theorem}[section]
\newtheorem{lemma}[thm]{Lemma}
\newtheorem{cor}[thm]{Corollary}
\newtheorem{defin}[thm]{Definition}
\newtheorem{que}[thm]{Question}
\newtheorem{alg}[thm]{Algorithm}
\newtheorem{rem}[thm]{Remark}
\numberwithin{equation}{section}
\author{\Large{Riccardo W. Maffucci}}
\newcommand{\Addresses}{  
	R.W.~Maffucci, \textsc{Dipartimento di Matematica, Universit\`a di Torino\\\indent Via Carlo Alberto 10, Turin 10123, Italy}\par\nopagebreak\vspace{-0.35cm}
	\textit{E-mail address}, R.W.~Maffucci: {\texttt{riccardowm@hotmail.com}}
}
\title{\Large{\uppercase{\bf On self-duality and unigraphicity for $3$-polytopes}}}
\date{}
\def\calE{\mathcal{E}}
\def\calF{\mathcal{F}}
\def\calP{\mathcal{P}}
\def\calV{\mathcal{V}}
\def\calZ{\mathcal{Z}}
\begin{document}
\titleformat{\section}
  {\Large\scshape}{\thesection}{1em}{}
\titleformat{\subsection}
  {\large\scshape}{\thesubsection}{1em}{}
\maketitle
\Addresses


\begin{abstract}
Recent literature \cite{maffucci2024characterising,delmaf,maffucci2025faces} investigated the problem of characterising the graph degree sequences with exactly one $3$-polytopal (planar, $3$-connected) realisation. This seems to be a difficult problem in full generality. In this paper, we characterise the sequences with exactly one self-dual $3$-polytopal realisation.

To settle our question and construct the relevant graphs, we apply iterated graph transformations. One of these is a generalisation of an algorithm \cite{mafpo3} that constructs a self-dual $3$-polytope for any degree sequence such that at least one such realisation exists.
\end{abstract}
{\bf Keywords:} Degree sequence, Planar graph, Algorithm, Self-dual, $3$-polytope, Unigraphic, Unique realisation, Valency, Forcibly, Rigidity.
\\
{\bf MSC(2010):} 05C85, 05C07, 05C76, 05C62, 05C10, 52B05, 52B10, 52C25.

\section{Introduction}
\subsection{Unigraphicity}
In this paper, we will work with undirected graphs $G$ having no multiple edges or loops. The degree sequence of $G$ is
\[\sigma: \ d_1,d_2,\dots,d_p,\]
where $V(G)=\{v_1,v_2,\dots,v_p\}$ is the set of vertices, and
\[d_i=\deg(v_i), \quad 1\leq i\leq p.\]
The degree sequence is of course defined up to reordering. The graph $G$ is called a \textit{realisation} of $\sigma$. In the other direction, Havel \cite{have55}, Hakimi \cite{hakimi}, and Erd\"os-Gallai \cite{erdgal} characterised the sequences of non-negative integers that have a graph realisation.

An interesting question is, does $\sigma$ uniquely determine $G$? In other words, which sequences have a unique graph realisation? For instance, \[2,2,2,1,1\]
is realised by $\calP_5$, the elementary path on $5$ vertices, but also by $K_3\cup K_2$, where $K_p$ denotes the complete graph of order $p$, and $\cup$ a disjoint union of graphs. On the other hand, $2^5$ (where the notation $n^k$ stands for the value $n$ being repeated $k$ times) has a unique realisation as a pentagon ($5$-cycle). We call these sequences (and corresponding realisations) \textbf{unigraphic}. This theory has been investigated e.g.\ in \cite{koren1,li1975,john80}. For more on degree sequences, we refer the interested reader to \cite{bose08,raosur,tysh87} and to the survey \cite{haki06}.

\subsection{Self-duality}
The study of polyhedra began as early as antiquity, with the five Platonic solids (regular polyhedra). These were of importance, to the extent that Plato related each to one of the five `natural elements' (e.g., the octahedron represented air). Euclid described their construction, and was aware that there are no other regular polyhedra. It was already known to the ancient Greeks that the tetrahedron is self-dual, the cube and octahedron form a dual pair, and the dodecahedron and icosahedron form a dual pair (see below for more details). Much later, Kepler assigned a regular polyhedron to each of the other planets in the Solar System observed at the time.

Nowadays we know that a graph is the $1$-skeleton of a polyhedral solid if and only if it is $3$-connected and planar -- the Rademacher-Steinitz Theorem \cite{radste}. Homeomorphic polyhedra have isomorphic graphs. We will refer to these as $3$-polytopal or polyhedral graphs interchangeably.

A graph is planar if it may be embedded in the plane (equivalently, on the surface of a sphere) such that no edges cross except at vertices. A plane graph is a planar graph considered together with an embedding in the plane. Whitney observed that a $3$-connected, planar graph has a unique embedding in the surface of a sphere, and thus a unique embedding in the plane once the external region has been chosen \cite{whit32}. For this reason, one may talk about {\em the} planar immersion of a polyhedron. Polyhedral vertices correspond to graph vertices, edges to edges, and polyhedral faces to plane graph regions in a natural way. For example, see \cite[Appendix A]{mafpo1} for graphical representations of all $3$-polytopes with $14$ or fewer edges. Tutte \cite{tutt61} described an algorithm to construct all polyhedra on $q$ edges starting from the set of polyhedra on $q-1$ edges. This algorithm relies also on constructing \textit{duals}.

The dual $G^*$ of a plane graph $G$ is obtained by defining a vertex of $G^*$ for each region of $G$, and an edge in $G^*$ between each pair of vertices corresponding to regions of $G$ that share an edge (`adjacent regions'). In general, the dual of a plane graph may contain loops and/or multiple edges and moreover, given a planar graph $G$, possibly more than one plane dual may be constructed, depending on the embedding of $G$. It is a remarkable property of polyhedral graphs that their duals are not only still free from multiple edges and loops, but also still polyhedral. Further, as polyhedra have a unique embedding up to choosing the external region, the dual is unique up to isomorphism. For this reason, one may use the terminology `{\em the} dual polyhedron'. For instance, the $n$-gonal prism is the dual of the $n$-gonal bipyramid. The operation of constructing the dual polyhedron is an involution, in the sense that $(G^*)^*\simeq G$. A recent result on degree sequences for dual pairs appeared in \cite{boro20}.

Some polyhedra, such as the tetrahedron, or indeed any pyramid (wheel graph), are isomorphic to their dual, $G^*\simeq G$. They are called self-dual polyhedra. Their construction, and more generally of all self-dual planar maps, was achieved in \cite{arcric,serchr,serser}. If a graph degree sequence has at least one self-dual polyhedral realisation (sometimes referred to in the literature as being a `potentially self-dual polyhedral sequence') then it is of the form
\begin{equation}
	\label{eqn:seq}
	t_1,t_2,\dots,t_{k},3^m,
\end{equation}
where $T=(t_1,t_2,\dots,t_k)$ is a $k$-tuple of (not necessarily distinct) integers satisfying $t_i\geq 4$, $1\leq i\leq k$, and
\begin{equation}
	\label{eqn:m}
	m=m(T)=4+\sum_{i=1}^{k}(t_i-4)
\end{equation}
(by Euler's formula and the handshaking lemma). In the other direction, in \cite[Algorithm 3.2]{mafpo3} we described iterated graph transformations to construct, for each sequence of this type, a self-dual polyhedral realisation
\begin{equation}
	\label{eq:pt}
P(T)=P((t_1,t_2,\dots,t_k)).
\end{equation}
We will summarise this construction in section \ref{sec:pre}, as it constitutes a central part of our discussion. The interested reader may also refer to \cite{mafpo3}. The graph operations are of similar flavour to those of \cite{br2005} (see also \cite{bata89}). For algorithms to generate other classes of polyhedral graphs see e.g.\ \cite{hash11, dillen, bowe67,mafpo2,hollowbread2025generation,maffucci2024classification,de2024cancellation,maffucci2025regularity}, and for planar graphs see e.g.\ \cite{brin07,hare98,chro95}.

A natural question is, which sequences are unigraphic and polyhedral? Maybe surprisingly, there are only eight solutions to this problem \cite{maffucci2024rao} (cf. \cite{rao978}).

One might then ask the following.
\begin{que}
\label{que:gen}
	Among the sequences with at least one polyhedral realisation (potentially polyhedral), which ones have \textit{exactly one polyhedral realisation}?
\end{que}
These graphs are the so called `unigraphic polyhedra': they are completely determined by their degree sequence. For instance (cf. \cite[Figure 8]{mafpo1}), 
\[
5,4,4,3,3,3
\]
has the unique polyhedral realisation of Figure \ref{fig:u}, whereas the two polyhedra in Figure \ref{fig:s} share the same sequence $4,3^6$. This seems to be a difficult problem in full generality. Partial answers were achieved in \cite{maffucci2024characterising,delmaf}. A big step forward came with \cite{maffucci2025faces}, where we proved that the faces of unigraphic polyhedra (other than pyramids) have length at most $9$.
\begin{figure}[h!]
	\centering
	\begin{subfigure}[b]{0.4\textwidth}
		\centering
		\includegraphics[width=2.5cm,clip=false]{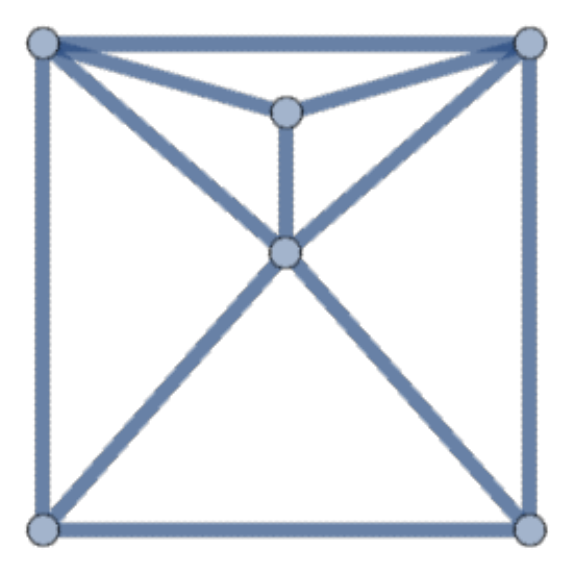}
		\caption{The only $3$-polytope of sequence $5,4,4,3,3,3$.}
		\label{fig:u}
	\end{subfigure}
	\hspace{0.5cm}
	\begin{subfigure}[b]{0.5\textwidth}
		\centering
		\includegraphics[width=2.5cm,clip=false]{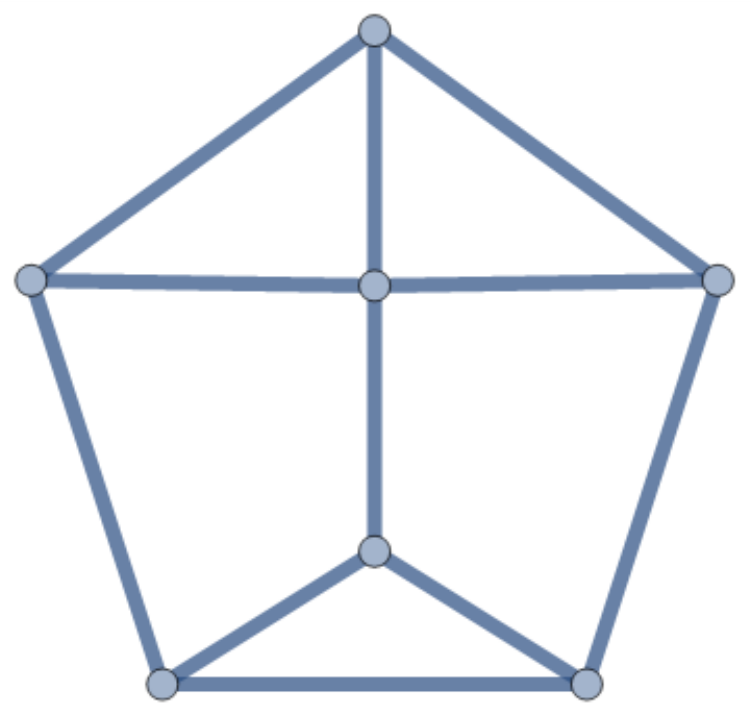}
		\hspace{0.25cm}
		\includegraphics[width=2.5cm,clip=false]{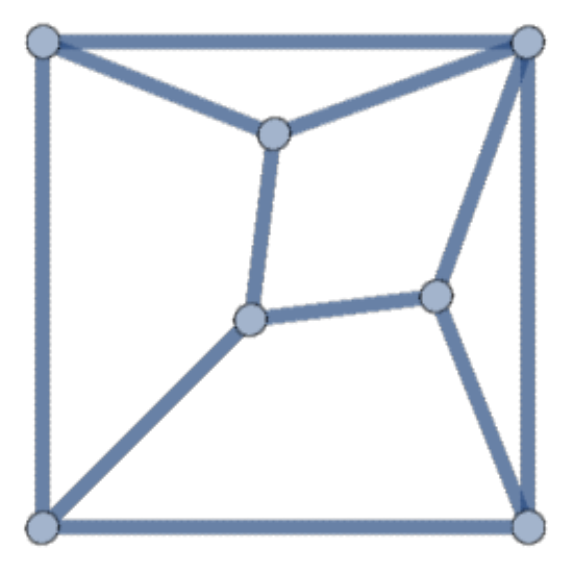}
		\caption{Two $3$-polytopes of sequence $4,3^6$.}
		\label{fig:s}
	\end{subfigure}
	\caption{}
	\label{fig:us}
\end{figure}

\subsection{The problem: sequences with a unique self-dual polyhedral realisation}
The main focus of this paper are the interactions between self-duality and unigraphicity. 
\begin{que}
Among the sequences \eqref{eqn:seq}, which ones are realised by exactly one self-dual polyhedron?
\end{que}
Of course \eqref{eqn:seq} may be realised by more than one self-dual $3$-polytope, and moreover, a realisation of \eqref{eqn:seq} is not necessarily a self-dual $3$-polytope. The result of our investigation is the following.

\begin{thm}
	\label{thm:1}
The only graph degree sequences with exactly one self-dual polyhedral realisation are
\begin{equation}
	\label{eqn:sxy}
x,y,3^{x+y-4}, \qquad x\geq y\geq 3.
\end{equation}
\end{thm}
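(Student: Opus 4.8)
The plan is to split the statement into two complementary parts: (i) every sequence of the form \eqref{eqn:sxy} has exactly one self-dual polyhedral realisation, and (ii) no other sequence of the form \eqref{eqn:seq} has this property. For part (i), a sequence \eqref{eqn:sxy} corresponds to the tuple $T=(x,y)$ (or $T=(x)$ when $y=3$, and the empty tuple when $x=y=3$, giving the tetrahedron), so $k\leq 2$. I would argue directly that with at most two vertices of degree $\geq 4$, the combinatorial structure of a self-dual $3$-polytope is forced. The key observation is that in a self-dual polytope the degree sequence of the vertices coincides with the face-size sequence, so there are at most two non-triangular faces, of sizes $x$ and $y$; and the $x+y-4$ triangles plus these faces must tile the sphere in a $3$-connected way. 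One shows that, up to the self-duality-respecting isomorphism, there is a unique way to do this — essentially the polytope produced by Algorithm \ref{alg:1} on input $(x,y)$ — and moreover any realisation of \eqref{eqn:sxy} that happens to be a $3$-polytope is automatically self-dual (or has no self-dual sibling with a different structure). A clean way to organise this is to first handle the pyramids (wheels), which arise when $y=3$: the sequence $x,3^{x-3}$ is exactly the degree sequence of the $x$-gonal pyramid, and one checks this is the only $3$-polytope with that sequence, recovering a case already in the literature on unigraphic polyhedra. The genuinely two-parameter case $x\geq y\geq 4$ requires showing the two large faces are forced to be in ``general position'' (not sharing too much boundary) and that the triangulated regions between them are rigid.

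For part (ii), I would show that as soon as the tuple $T$ has $k\geq 3$ entries $\geq 4$ (or $k=2$ but with the entries arranged so as to allow a second configuration — though I expect $k=2$ always gives uniqueness, so the real threshold is $k\geq 3$), one can exhibit two non-isomorphic self-dual $3$-polytopes with the same degree sequence. The natural tool is Algorithm \ref{alg:1} together with the ``modifications of it'' and ``new ones'' advertised in the abstract: run the algorithm, then perform a local surgery on the radial graph that preserves self-duality and the degree sequence but changes the isomorphism type. Concretely, with three or more vertices of degree $\geq 4$ there is enough freedom in how the large faces are placed relative to one another on the sphere — e.g. a ``cyclic'' versus a ``nested'' or ``linear'' arrangement of the three large faces — to produce distinct polytopes. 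I would make this precise by describing one explicit base construction and one explicit alternative, then verifying both are $3$-connected, planar, self-dual, and share sequence \eqref{eqn:seq}, and finally exhibiting an invariant (for instance the cyclic pattern of large-face sizes around some separating triangle, or the pairwise adjacency pattern among the non-triangular faces) that distinguishes them.

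The main obstacle, I expect, is the uniqueness direction (i) in the two-large-faces case $x\geq y\geq 4$: ruling out \emph{all} alternative self-dual realisations, not just showing the algorithm's output is self-dual. One has to show both that the large-face skeleton is rigid and that there is no ``exotic'' self-dual polytope with two faces of sizes $x,y$ and all other faces triangular that fails to be isomorphic to $P((x,y))$ — and, separately, that a non-self-dual $3$-polytopal realisation of \eqref{eqn:sxy}, should one exist, does not spoil the count (here one must be careful: the theorem asserts uniqueness \emph{among self-dual realisations}, so a non-self-dual realisation is harmless, but one must confirm the self-dual one is genuinely unique up to isomorphism). I would attack this by a discharging-style or direct case analysis on how the $\geq 4$-gons meet the triangulated part, leaning on the structural face-size restrictions from \cite{mafoct} and on the radial-graph formalism of section \ref{sec:pre} to keep the casework finite; the linear-time algorithms will then serve both to certify existence and to generate the distinguishing examples in part (ii).
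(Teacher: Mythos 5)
Your decomposition (uniqueness for at most two entries $\geq 4$, non-uniqueness for $k\geq 3$, with the constructions driven by Algorithm \ref{alg:1} and distinguished by adjacency invariants among the high-degree vertices/faces) is the same architecture as the paper's, but the plan stops exactly where the real work lies, and one of the gaps is substantive. For part (ii), your only concrete mechanism for producing a second realisation is varying the relative arrangement of the large faces, which in algorithmic terms amounts to permuting the input tuple; this is indeed what the paper does when the $t_i$ are not all equal (using a lemma on which high-degree vertices end up adjacent, and comparing the subgraphs induced by degrees $\geq 4$). But it gives nothing for the constant sequences $n^k,3^{4+(n-4)k}$, where there is only one permutation. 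Your fallback, a ``local surgery on the radial graph that preserves self-duality and the degree sequence,'' is precisely the delicate point you cannot take for granted: the paper stresses that applying the operation $\calZ$ with a different labelling on the radial need \emph{not} preserve self-duality. The paper handles $n\geq 5$ by rerunning Algorithm \ref{alg:1} from a different seed (the relabelled radial of $S(n,n)$ instead of the cube), and handles $4^k,3^4$ by inventing a new edge-splitting procedure (Algorithm \ref{alg:2}) whose self-duality must be proved by a separate induction on the vertex and face adjacency patterns; non-isomorphism there is detected by $H_3$, the subgraph on degree-$3$ vertices, not by the large-face pattern (the sequence $4^k,3^4$ has only four degree-$3$ vertices, and all large faces are quadrilaterals, so your proposed invariants are much weaker here). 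Without some replacement for this, the case $T=(n,n,\dots,n)$, and especially $n=4$, is an open hole in your argument.

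For part (i) with $x\geq y\geq 4$, you correctly identify it as the uniqueness bottleneck but leave it at ``rigidity'' and ``general position.'' The paper's argument is short and you should be able to complete yours along the same lines: by self-duality the face vector is forced, the $x$-gon and $y$-gon must share an edge (otherwise the vertex count $\geq x+y-1$ exceeds $x+y-2$), the degree-$x$ vertex must lie on the $y$-gon and be joined to every non-shared vertex of the $x$-gon (and symmetrically for the degree-$y$ vertex), and planarity — via a $K_{3,3}$-minor obstruction — pins down the positions of these two vertices up to relabelling, yielding the unique $S(x,y)$. Note also that no rigidity-theoretic input or appeal to the face-size restrictions of the unigraphic-polyhedra literature is needed. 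Two small corrections: for $y=3$ the sequence is $x,3^{x}$ (not $x,3^{x-3}$), and you do not need the pyramid to be the unique $3$-polytope with that sequence, only the unique self-dual one, which is immediate.
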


The proof of Theorem \ref{thm:1} is written in section \ref{sec:proof}.

\begin{defin}
	\label{def:S}
We will use the notation $S(x,y)$ for the unique self-dual polyhedron realising \eqref{eqn:sxy}.
\end{defin}

\begin{figure}[h!]
	\centering
	\begin{subfigure}[b]{0.4\textwidth}
		\centering
		\includegraphics[width=4.0cm,clip=false]{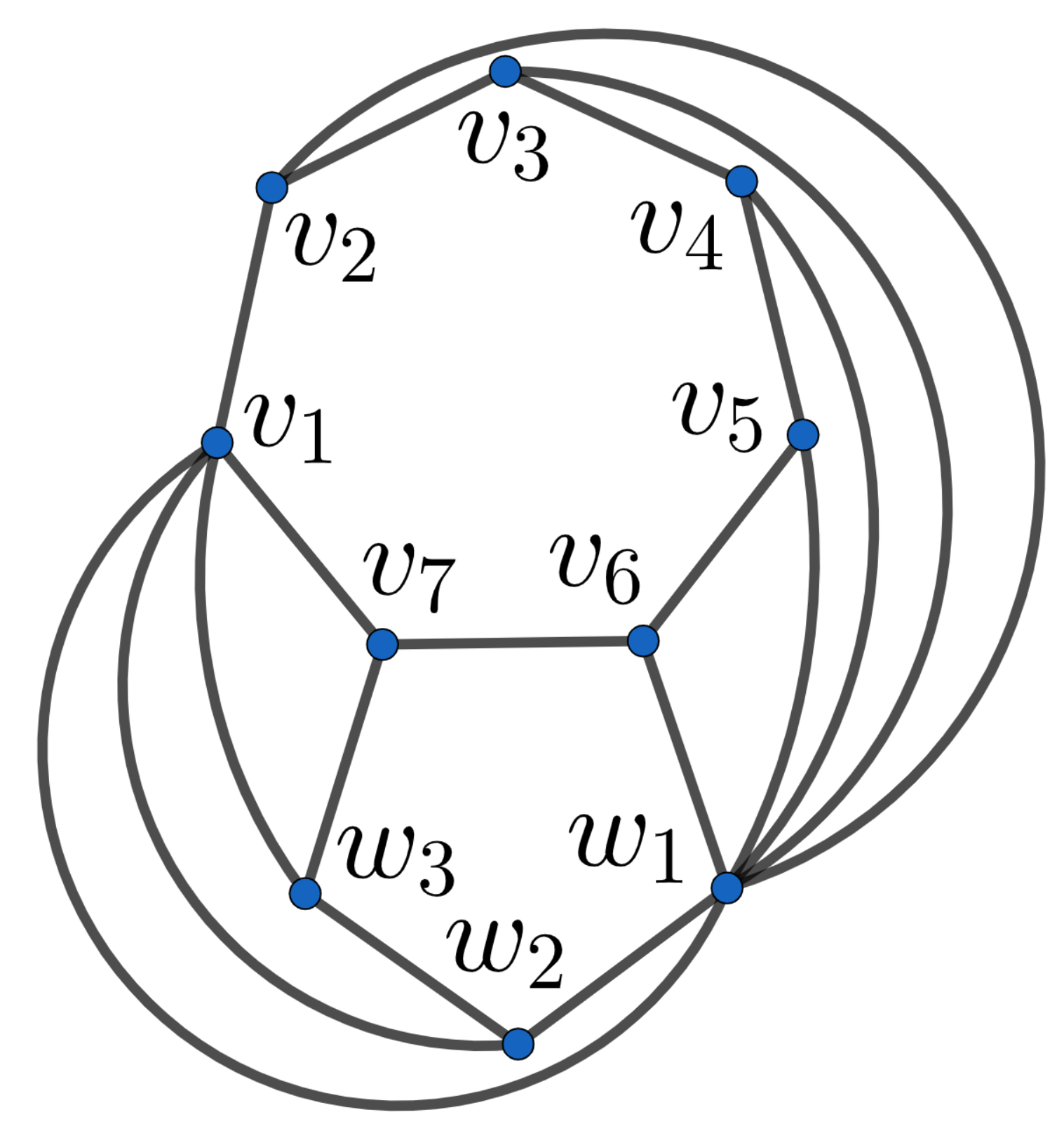}
		\caption{The self-dual $3$-polytope $S(7,5)$.}
		\label{fig:75}
	\end{subfigure}
	\hspace{0.5cm}
	\begin{subfigure}[b]{0.4\textwidth}
		\centering
		\includegraphics[width=4.5cm,clip=false]{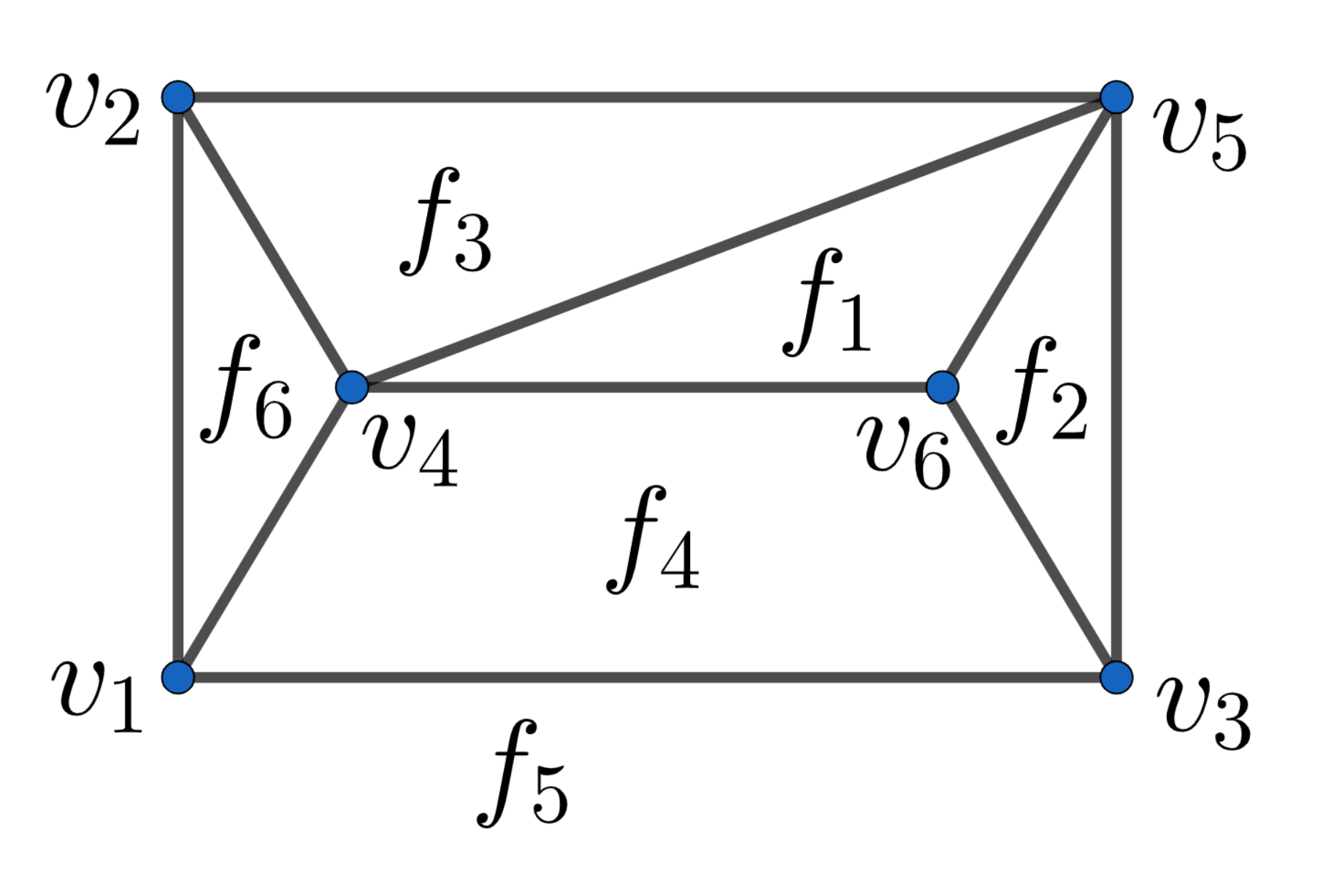}
		\caption{The self-dual $3$-polytope $S(4,4)$.}
		\label{fig:44}
	\end{subfigure}
\caption{Two examples of $S(x,y)$.}
\label{fig:7544}
\end{figure}

A couple of examples are given in Figure \ref{fig:7544}. As a consequence of Theorem \ref{thm:1}, each $S(x,y)$ with $y\geq 4$ may be constructed by inputting the pair $(x,y)$, in either order, into \cite[Algorithm 3.2]{mafpo3}. That is to say, referring to \eqref{eq:pt},
\[S(x,y)\simeq P((x,y))\simeq P((y,x)), \qquad x\geq y\geq 4.\]
Note that $S(x,3)$ is the $x$-gonal pyramid (wheel graph). For $x\geq 4$, we have $S(x,3)\simeq P((x))$.

We can also answer the following related question: among the unigraphic polyhedra, which ones are self-dual?
\begin{cor}
	\label{cor}
Among the graph degree sequences with exactly one polyhedral realisation, this realisation is self-dual only in the case of pyramids and of $S(4,4)$.
\end{cor}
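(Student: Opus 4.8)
The plan is to derive Corollary \ref{cor} directly from Theorem \ref{thm:1} together with the classification result of \cite{mafrao}. By that result, there are exactly eight graph degree sequences admitting exactly one polyhedral realisation. The strategy is simply to intersect this list of eight with the family \eqref{eqn:sxy} classified in Theorem \ref{thm:1} as having a unique \emph{self-dual} polyhedral realisation, keeping in mind the subtlety that a sequence may have a unique polyhedral realisation that happens to be self-dual, or a unique self-dual realisation that is \emph{not} its unique polyhedral realisation.

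First I would recall explicitly the eight sequences from \cite{mafrao}. Among these, one identifies which are of the form \eqref{eqn:sxy}, i.e. $x,y,3^{x+y-4}$ with $x\ge y\ge 3$; these are exactly the small pyramids together with $S(4,4)$'s sequence $4,4,3^4$. For each such sequence, Theorem \ref{thm:1} tells us its unique self-dual realisation is $S(x,y)$, and since by \cite{mafrao} it also has a unique polyhedral realisation overall, that realisation must coincide with $S(x,y)$ — hence is self-dual. This handles the pyramids (wheel graphs $S(x,3)$, whose self-duality is classical) and $S(4,4)$.

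Second, I would argue the converse: no other sequence in the list of eight has a self-dual realisation. For a sequence on the \cite{mafrao} list that is \emph{not} of the form \eqref{eqn:seq}, it cannot have any self-dual polyhedral realisation at all, by the structural necessity of \eqref{eqn:seq}–\eqref{eqn:m} for potentially self-dual sequences. For a sequence on the list that \emph{is} of the form \eqref{eqn:seq} but not of the restricted form \eqref{eqn:sxy} — if any such occurs among the eight — I would use Theorem \ref{thm:1} contrapositively: not being of the form \eqref{eqn:sxy}, it does not have exactly one self-dual realisation, so it has either zero or at least two; but since it has exactly one polyhedral realisation, it cannot have two or more self-dual ones, hence it has zero, i.e. its unique polyhedral realisation is not self-dual. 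Combining the two directions gives precisely the pyramids and $S(4,4)$.

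The main obstacle is bookkeeping rather than conceptual: one must reproduce the list of eight sequences from \cite{mafrao} accurately, correctly match each against the families \eqref{eqn:seq} and \eqref{eqn:sxy}, and verify which of the eight unique realisations are in fact self-dual — in particular confirming that $S(4,4)$'s sequence $4,4,3^4$ is on the list with $S(4,4)$ as its sole realisation, and that the wheels appearing on the list are exactly those small pyramids. No deep argument beyond Theorem \ref{thm:1} and the cited classification is needed; the care lies in the case analysis over the finite list.
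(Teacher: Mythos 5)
Your proposal rests on a misreading of the cited classification. The result of \cite{mafrao} lists the eight sequences having exactly one realisation among \emph{all} graphs, whose unique realisation happens to be polyhedral; it does not classify the sequences with exactly one \emph{polyhedral} realisation. The latter class --- the ``unigraphic polyhedra'' of Question \ref{que:gen} --- is the one Corollary \ref{cor} is about, and the paper states explicitly that its full characterisation is a difficult open problem; it is an infinite class (every wheel is the unique polyhedral realisation of its sequence $n,3^n$, yet for large $n$ that sequence has many non-polyhedral realisations, so it is not on the list of eight, and likewise $4,4,3^4$ is not unigraphic among all graphs). Hence there is no finite list to intersect with \eqref{eqn:sxy}: your method would certify at most the pyramids $n=3,4,5$ (the paper itself records that, among the eight, only these are self-dual) and would miss all larger pyramids and $S(4,4)$, contradicting the very statement to be proved.

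What is actually needed, and what the paper does, is the following. By Theorem \ref{thm:1}, if a sequence has a unique polyhedral realisation and that realisation is self-dual, then it is also the unique self-dual realisation, so the sequence is of the form \eqref{eqn:sxy}. One must then show that for $x\geq 5$, $y\geq 4$ such a sequence has a \emph{second} polyhedral realisation; the paper constructs it explicitly as $Q(x,y)$, an edge-splitting of $S(x,y-1)$, which has the same degree sequence but contains quadrangular faces and an $(x-1)$-gon, hence is not self-dual and not isomorphic to $S(x,y)$. Finally one invokes the known fact that pyramids and $S(4,4)$ are indeed unigraphic polyhedra. This constructive step for $x\geq 5$, $y\geq 4$ is the heart of the corollary and is absent from your proposal; it cannot be replaced by citing \cite{mafrao}, whose scope is a strictly smaller (finite) family of sequences.
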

Corollary \ref{cor} will be proven in section \ref{sec:1p}.

We also observe that, among the sequences with exactly one realisation in the class of all graphs, out of the eight $3$-polytopal solutions mentioned above from \cite{maffucci2024rao}, the only three self-dual polyhedra correspond to the $n$-gonal pyramids, for $n=3,4,5$.


\subsection{Plan of the paper}
The rest of the paper is organised as follows. In section \ref{sec:limi} we will collect prior results, as well as basic notation, terminology and conventions that will be used throughout. Section \ref{sec:proof} is entirely dedicated to proving Theorem \ref{thm:1}.

More in detail, sections \ref{sec:limi} and \ref{sec:proof} have the following structure. Section \ref{sec:bas} contains some basic graph-theoretical notation. Section \ref{sec:rad} presents the definition and a few properties of radial graphs. In section \ref{sec:H}, we will define certain subgraphs that will be employed later to show non-isomorphism between graphs. Section \ref{sec:Z} contains the definition of a graph transformation called $\calZ$ (in \cite{mafpo3} it is called $\calP$), that acts on the radial graph of a graph. Having defined $\calZ$, it will be possible to describe \cite[Algorithm 3.2]{mafpo3} in section \ref{sec:pre}. In section \ref{sec:deep} we will analyse the strengths of \cite[Algorithm 3.2]{mafpo3}.

Section \ref{sec:out} contains an outline of proof for Theorem \ref{thm:1}. In section \ref{sec:1p}, we will show that the degree sequence $x,y,3^{x-y-4}$ is unigraphic in the class of self-dual polyhedra. In section \ref{sec:2p}, we will prove non-unigraphicity for the sequence $t_1,\dots,t_k,3^m$ where $k\geq 3$ and the $t_i$'s are not all equal. In section \ref{sec:3p}, we will prove non-unigraphicity for $n^k,3^{4+k(n-4)}$ where $n\geq 5$ and $k\geq 3$. It then remains to show that $4^k,3^4$, $k\geq 3$ is not unigraphic, in section \ref{sec:4p}.



\section{Preliminaries}
\label{sec:limi}

Here we will collect prior results, notation, and terminology.

\subsection{Basic notation and terminology}
\label{sec:bas}
We will denote by $V(G)$ and $E(G)$ the vertex and edge sets of a graph $G$. The notation $\calP_n$ will indicate the elementary path on $n$ vertices, and $\cup$ the disjoint union of graphs. Regions of a plane graph are written in the form
\[F=[u_1,u_2,\dots,u_n].\]

The notation
\[\sigma: \ d_1,d_2,\dots,d_p,\]
refers to the degree sequence of $G$, where $V(G)=\{v_1,v_2,\dots,v_p\}$ and
\[d_i=\deg(v_i), \quad 1\leq i\leq p.\]
Repeated values in $\sigma$ are indicated with exponents i.e., $n^k$ means that the value $n$ is repeated $k$ times.

\subsection{Radial graphs}
\label{sec:rad}
We will construct self-dual $3$-polytopes of desired degree sequence via their \textit{radial graph}, also known as the vertex-face graph. Given the plane graph $\Gamma=(\calV,\calE)$ together with its set of regions $\calF$, the radial graph is defined as
\[R(\Gamma)=(\calV\cup\calF, E(R(\Gamma))),\]
where
\[E(R(\Gamma))=\{vf : v\in\calV, f\in\calF, \text{ and } v \text{ lies on the boundary of } f \text{ in } \Gamma\}.\]

If $\Gamma$ is also $2$-connected, then $R(\Gamma)$ is a quadrangulation of the sphere i.e., a $2$-connected, plane graph where every region is delimited by a $4$-cycle \cite[section 2.8]{mohtho}. A pair of vertices is adjacent in $\Gamma$ if and only if the corresponding ones in $R(\Gamma)$ are (opposite vertices) on the same quadrangular region.

Further, if $\Gamma$ is $3$-connected then so is its radial graph \cite[Lemma 2.1]{arcric}. If $\Gamma$ is a $2$-connected plane graph, then $\Gamma$ is $3$-connected if and only if $R(\Gamma)$ has no separating $4$-cycles \cite[Lemma 2.8.2]{mohtho}.

Related to the radial graph, we mention here for the interested reader a construction of Shabat-Voevodsky \cite{shabat2013drawing} (see also notes by Grothendieck \cite{grothendieck1984esquisse}). In this construction, a $3$-coloured triangulation is associated to a planar map, and one may recover the radial graph of the planar map from two colour classes of the triangulation.

\subsection{The subgraphs $H_3$ and $H_+$}
\label{sec:H}

One strategy to prove that two graphs are non-isomorphic is to inspect certain respective subgraphs, for instance the ones defined below, and prove that they are not isomorphic.
\begin{defin}
	For any graph $\Gamma$, call respectively \[H_3(\Gamma) \qquad \text{ and } \qquad H_+(\Gamma)\]
	the subgraph generated by the vertices of degree $3$, and	the one generated by the vertices of degree $\geq 4$.
\end{defin}
For instance, $H_3(R(S(6,6)))\simeq\calP_8\cup\calP_8$, and $H_+(R(S(6,6)))\simeq\calP_2\cup\calP_2$.

\subsection{$\calZ$ transformation}
\label{sec:Z}
We consider the transformation $\calZ$ depicted in Figure \ref{fig:1a} (called $\calP$ in the notation of \cite{mafpo3}). It is applied to $R(\Gamma)$, where $\Gamma$ is a plane graph.
\begin{figure}[h!]
	\centering
	\begin{subfigure}[m]{0.4\textwidth}
		\centering
		\includegraphics[width=3.0cm,clip=false]{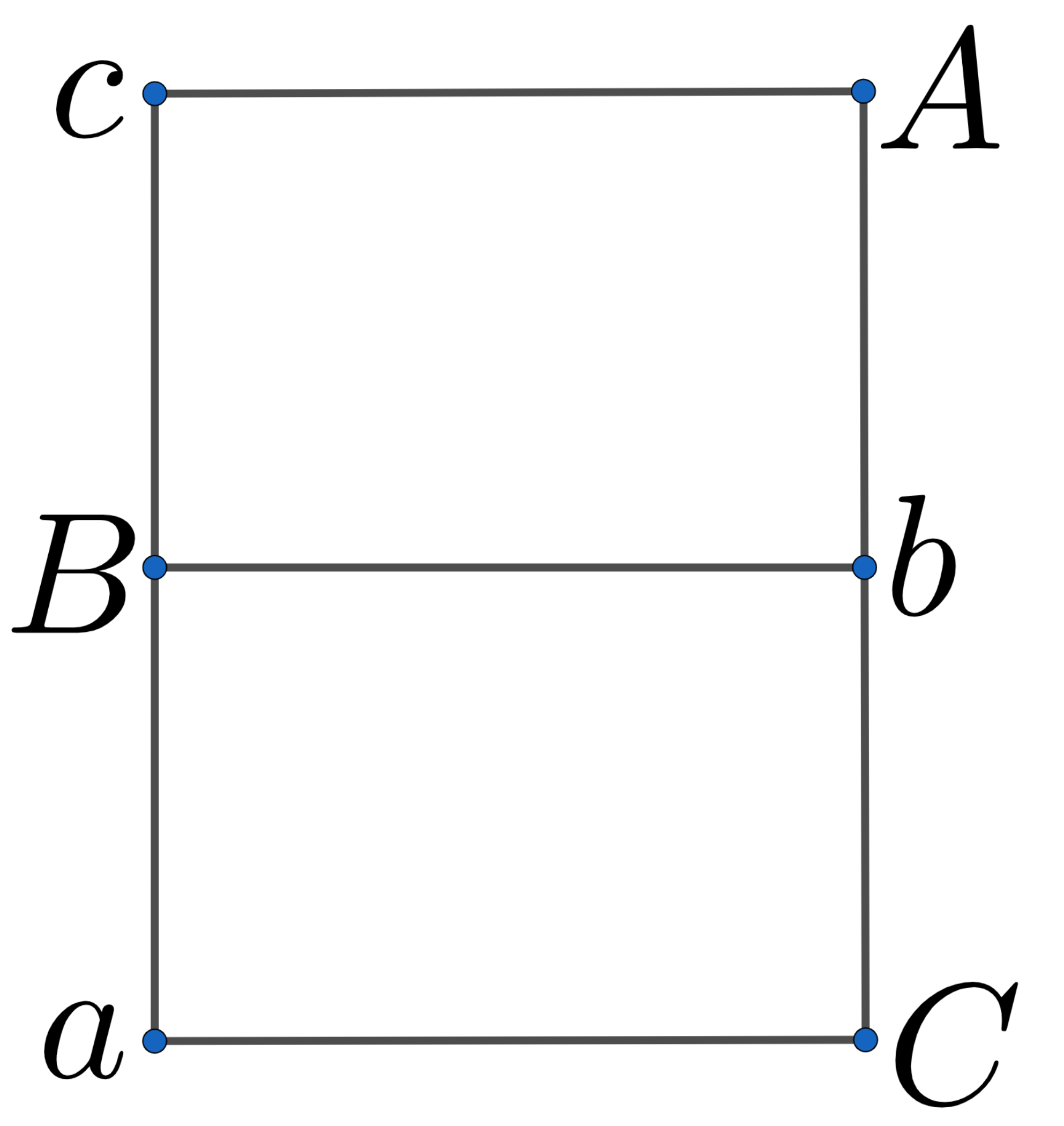}
	\end{subfigure}
	\hspace{-0.75cm}
	$\xrightarrow{\displaystyle\calZ}$
	\hspace{-0.75cm}
	\begin{subfigure}[m]{0.4\textwidth}
		\centering
		\includegraphics[width=3.0cm,clip=false]{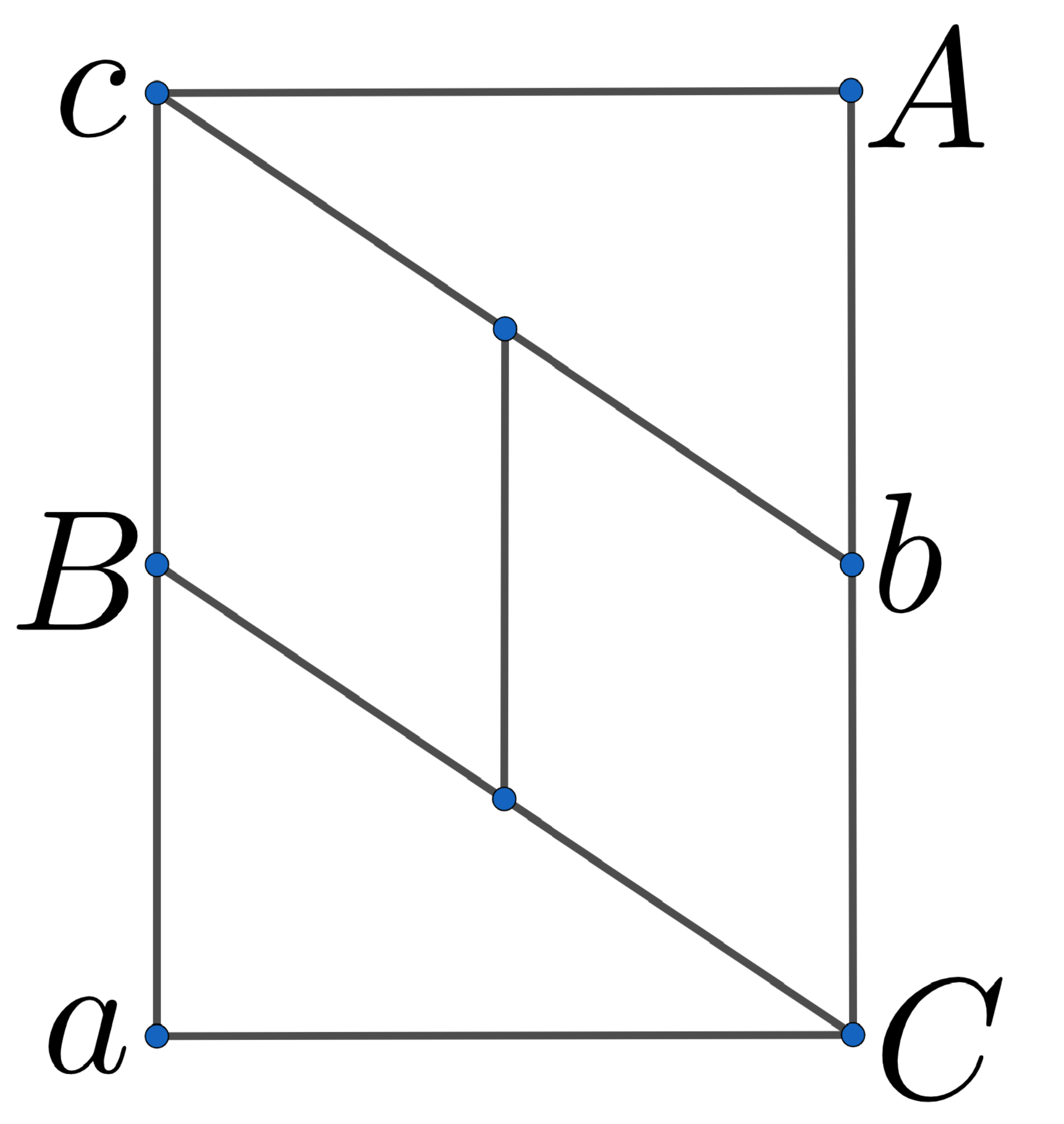}
	\end{subfigure}
	\caption{The transformation $\calZ$ applied to the radial graph of a graph $\Gamma$. Vertices of $\Gamma$ have lower-case labels, those of $\Gamma^*$ upper-case. The degree of $c,C$ increases by $1$, and two new vertices are inserted.}
	\label{fig:1a}
\end{figure}

It was proven in \cite[section 3.2]{mafpo3} that applying $\calZ$ to $R(\Gamma)$ preserves planarity and $3$-connectivity of the original graph. Further, the operation $\calZ$ acts on the radial graph in such a way that $\Gamma$ and $\Gamma^*$ are transformed in exactly the same fashion, preserving self-duality \cite[Remark 3.6]{mafpo3}. Note that in general, applying $\calZ$ with a different labelling on the radial graph does not necessarily preserve self-duality of $\Gamma$.

More in detail, the operation $\calZ$ corresponds to a so-called `edge-splitting' simultaneously in both $\Gamma$ and $\Gamma^*$. To split an edge $ab\in E(\Gamma)$, one performs
\begin{equation}
	\label{eqn:split}
	\Gamma-ab+d+da+db+dc,
\end{equation}
where $c$ is a vertex on the contour of a face containing $ab$ in $\Gamma$, $d$ is a new vertex, and $-/+$ stand for vertex or edge deletion/addition. In $\Gamma$, the number of vertices and faces increase by one, the new vertex $d$ is of degree $3$, the vertex $c$ increases its degree by one (it is now adjacent to $d$), and the degrees of all other vertices are unchanged. Planarity is preserved.

More generally, in rigidity theory, a $1$-extension of a $3$-connected generic circuit is another $3$-connected generic circuit. In fact, every $3$-connected generic circuit may be obtained by applying $1$-extensions on an initial tetrahedron \cite{berjor}. For rigidity theory in general, see e.g.\ \cite{lova82}.


\subsection{Algorithm to construct self-dual $3$-polytopes of given sequence}
\label{sec:pre}

\begin{alg}[{\cite[Algorithm 3.2]{mafpo3}}]
		\label{alg:1}
		\noindent\textbf{Input.} A $k$-tuple of integers \[T=(t_1,t_2,\dots,t_k),\] with $t_i\geq 4$ for each $i$.
		\\
		\noindent\textbf{Output.} A self-dual polyhedron $P(T)$ of degree sequence \eqref{eqn:seq}.
		\\
		\noindent\textbf{Description.} 
One starts with the cube, radial graph of the tetrahedron (the smallest polyhedron and self-dual). First, one applies the transformation $\calZ$ in Figure \ref{fig:1a} (to any two adjacent faces of the cube). Next, one performs a relabelling, in either of two ways -- Figures \ref{fig:1b} and \ref{fig:1c}. The choice between these two depends on the first entry $t_1$ of the inputted tuple
\[T=(t_1,\dots,t_k).\]
If $t_1\geq 5$, we relabel as in Figure \ref{fig:1b} before reapplying $\calZ$, whereas if $t_1=4$, we relabel as in Figure \ref{fig:1c}. One also decreases $t_1$ by $1$ before proceeding, eliminating it from the tuple when it drops below the value $4$. The procedure stops when $T$ is empty. To summarise, for each $t_i$ in turn, $\calZ$ is performed a total of $t_i-4$ times. After each time except the last, we relabel as in Figure \ref{fig:1b}, whereas after the $t_i-4$-th application, we relabel as in Figure \ref{fig:1c} before proceeding to work on $t_{i+1}$ (and stopping after $t_k$). The result is $R(P(T))$. Finally, one recovers $P(T)$ from its radial graph.

\begin{figure}[h!]
	\centering
	\begin{subfigure}{0.44\textwidth}
		\centering
		\includegraphics[width=3.0cm,clip=false]{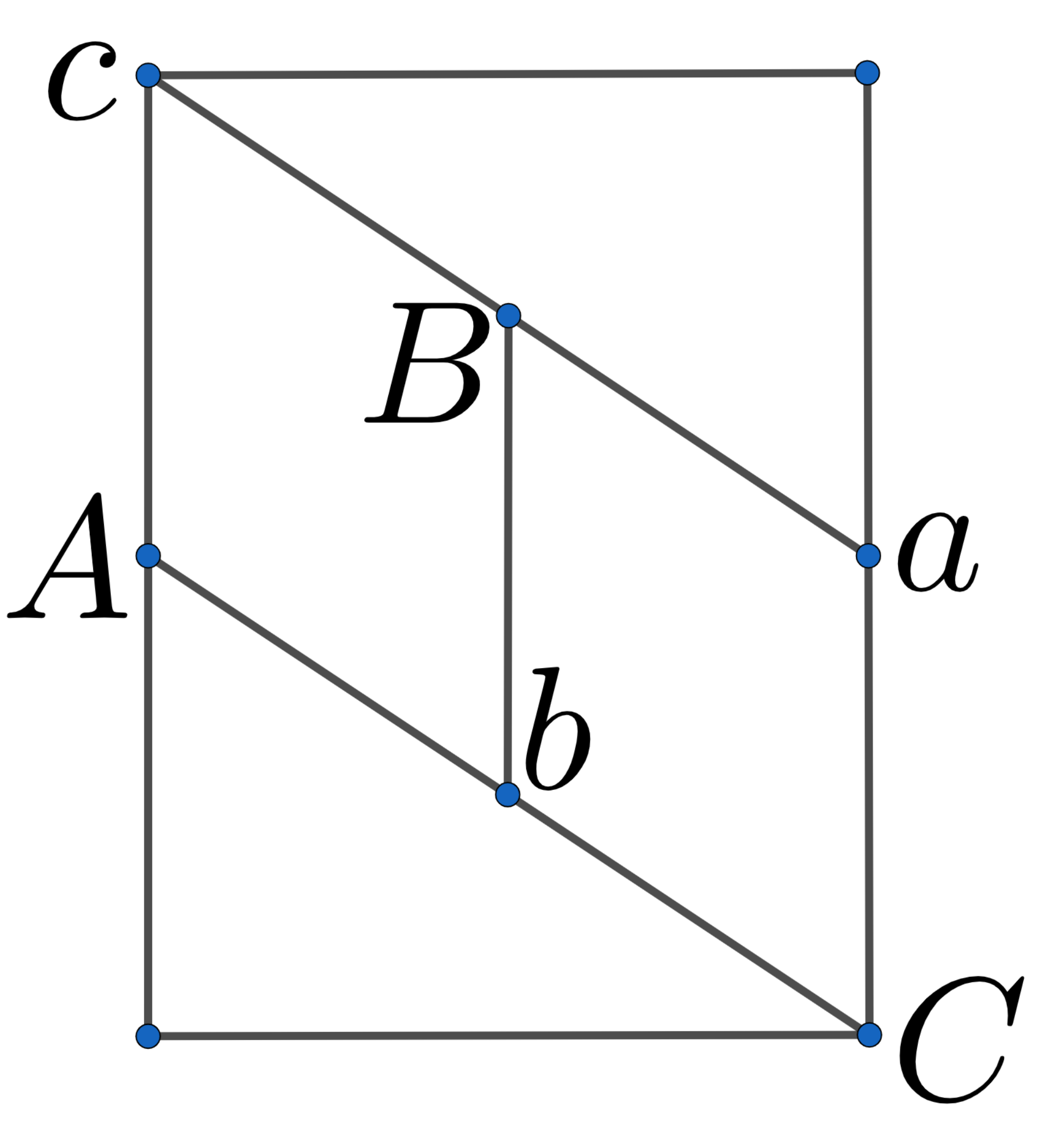}
		\caption{Relabelling the vertices when $t_i\geq 5$. Those with labels $c,C$ will continue to increase their degrees upon applying $\calZ$ again.}
		\label{fig:1b}
	\end{subfigure}
	\hspace{1.0cm}
	\begin{subfigure}{0.44\textwidth}
		\centering
		\includegraphics[width=3.0cm,clip=false]{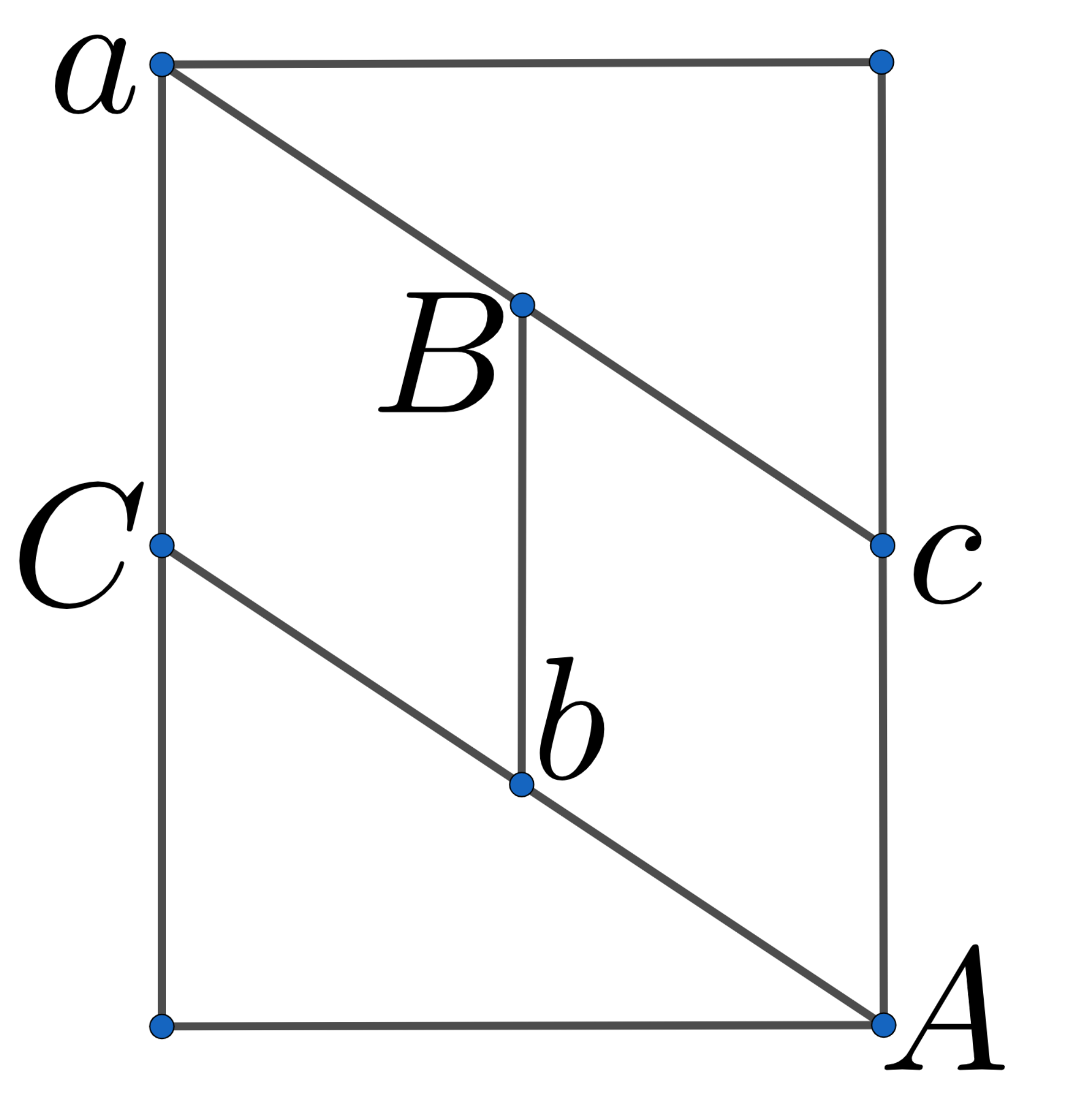}
		\caption{Relabelling the vertices when $t_i=4$. The degrees of $a,A$ will not change any more, while those of $c,C$ will increase by $1$ with the next application of $\calZ$.}
		\label{fig:1c}
	\end{subfigure}
	\caption{Relabelling the vertices after the transformation $\calZ$.}
	\label{fig:1bc}
\end{figure}
\end{alg}
The example $R(P((6,6)))$ is depicted in Figure \ref{fig:66}.
\begin{figure}[h!]
	\centering
	\includegraphics[width=7.0cm,clip=false]{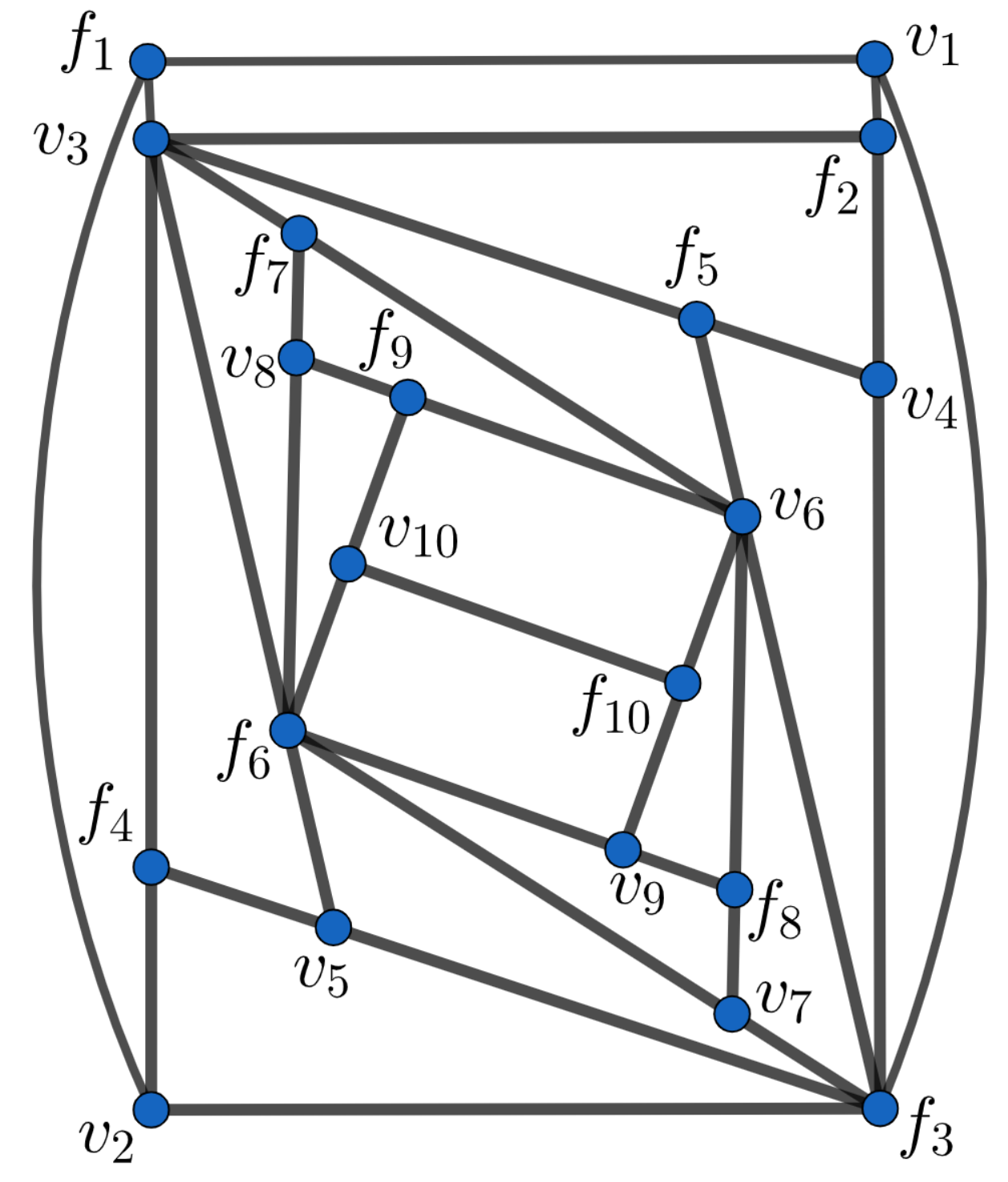}
	\caption{The graph $R(P((6,6)))$, radial graph of the self-dual $3$-polytope $P((6,6))\simeq S(6,6)$. The bijection $\varphi: v_i\mapsto f_i$ determines a graph isomorphism between $P((6,6))$ and its dual. To construct $P((6,6))$ via Algorithm \ref{alg:1}, one starts with an initial cube of vertices $\{v_1,f_1,v_2,f_2,v_3,f_3,v_4,f_4\}$. Then one assigns $a:=v_2$, $A:=f_2$, $c:=v_3$, $C:=f_3$, $b:=v_4$, $B:=f_4$ as in Figure \ref{fig:1a}, to implement the algorithm with input $(6,6)$. The first pair of vertices to be inserted is $v_5,f_5$, then $v_6,f_6$, and so forth up to $v_{10},f_{10}$. Lastly one recovers $P((6,6))$ from $R(P((6,6)))$.}
	\label{fig:66}
\end{figure}

\begin{rem}
If the above procedure is stopped short after $t_i$ applications of $\calZ$ for each $1\leq i\leq k'-1< k$, and $1\leq j\leq t_{k'}$ further applications for $t_{k'}$, then the output is (the radial graph of)
\[P((t_1,t_2,\dots,t_{k'-1},j)).\]
\end{rem}

\subsection{Strengths of Algorithm \ref{alg:1}}
\label{sec:deep}
The power behind this procedure was showcased in \cite{mafpo3}, where we proved that it produces a self-dual $3$-polytope for any choice of admissible degree sequence \eqref{eqn:seq}.

As stated in the introduction, another important feature of Algorithm \ref{alg:1} is that it allows to increase one vertex degree at a time, leaving the rest unchanged, and inserting new vertices only of degree $3$. This feature makes the algorithm very versatile. It will be handy for instance when defining modifications of the algorithm later on.

We define a `canonical' labelling for the vertices of $R(P(T))$,
\begin{equation}
\label{eqn:can}
\{v_1,f_1,v_2,f_2,\dots,v_{k+m},f_{k+m}\},
\end{equation}
where the $v_i$'s are vertices of $P(T)$, and the $f_i$'s of its dual. Labels are assigned as follows. For the initial cube we have \[\{v_1,f_1,v_2,f_2,v_3,f_3,v_4,f_4\}.\]
To perform the algorithm, we set $a:=v_2$, $A:=f_2$, $c:=v_3$, $C:=f_3$, $b:=v_4$, $B:=f_4$ as in Figure \ref{fig:1a}. Each application of $\calZ$ introduces two new vertices, that we label in turn $v_5,f_5$, then $v_6,f_6$, and so forth up to $v_{k+m},f_{k+m}$. With such labelling, the map
\begin{align}
	\label{eqn:phi}
	\notag\varphi: V(P(T))&\to V((P(T))^*)\\
	v_i&\mapsto f_i
\end{align}
defines a graph isomorphism between $P(T)$ and its dual (this is tantamount to proving that, in the algorithm, each application of $\calZ$ preserves self-duality \cite{mafpo3}). The case $T=(6,6)$ is sketched in Figure \ref{fig:66}.

\section{Only $x,y,3^{x-y-4}$ is unigraphic in the class of self-dual polyhedra: proof of Theorem \ref{thm:1}}
\label{sec:proof}

\subsection{Outline of proof for Theorem \ref{thm:1}}
\label{sec:out}
In section \ref{sec:1p}, we will prove via direct constructions that each $S(x,y)$ of Definition \ref{def:S} is unigraphic in the class of self-dual polyhedra. We will end the section with the proof of Corollary \ref{cor}.

As remarked in \cite[section 3]{mafpo3}, on inputting permutations of the tuple $T=(t_1,\dots,t_k)$ into Algorithm \ref{alg:1}, one may obtain non-isomorphic graphs. This is one of the ideas that we will use in section \ref{sec:2p} to show that `most' of the sequences \eqref{eqn:seq} have more than one non-isomorphic self-dual realisation. To show that they are not isomorphic, we will inspect the respective subgraphs $H_+(G)$.

A type of sequence left out in the above reasoning is
\begin{equation}
	\label{eqn:seqnk}
	n^k,3^{4+(n-4)k}, \quad n\geq 4, k\geq 3,
\end{equation}
where the entries of the tuple are all equal, giving rise to only one permutation. To prove non-unigraphicity for \eqref{eqn:seqnk}, in section \ref{sec:3p} we will apply Algorithm \ref{alg:1} with a starting graph different from the cube. Comparing the subgraphs $H_+(R(G))$, we will show that the new family of graphs are not isomorphic to the corresponding
\[P((\underbrace{n,n,\dots,n}_{k \text{ times}})),\]
obtained via Algorithm \ref{alg:1} on the cube. This will work for $n\geq 5$.



Lastly, for the case
\[4^k,3^4, \qquad k\geq 3\]
in section \ref{sec:4p} we will write Algorithm \ref{alg:2} (see below) to produce a family of self-dual polyhedra, not isomorphic to the ones constructed via Algorithm \ref{alg:1}. To show that they are not isomorphic, we will analyse the respective subgraphs $H_3(G)$.

\subsection{Unigraphicity of $x,y,3^{x-y-4}$ in the class of self-dual polyhedra}
\label{sec:1p}
For $y=3$ the result is clear. Let us construct a self-dual $3$-polytope $S(x,y)$ of sequence
\[x,y,3^{x-y-4}, \quad x\geq y\geq 4.\]
By self-duality, there are $x-y-2$ faces, of which one $x$-gonal, one $y$-gonal, and the rest triangular. In any polyhedron, two distinct faces share either $0,1,$ or $2$ vertices (in the latter case, they share an edge, i.e., they are adjacent faces). The $x$-gon and $y$-gon must be adjacent -- c.f. Figure \ref{fig:75}, otherwise $S(x,y)$ would have
\[\geq x+y-1\]
vertices, contradiction. We now assign the labelling
\begin{equation}
	\label{eqn:XY}
X=[v_1,v_2,\dots,v_x], \qquad Y=[w_1,w_2,\dots,w_{y-2},v_{x},v_{x-1}]
\end{equation}
to the vertices of the $x$-gon and $y$-gon (as in Figure \ref{fig:75}). We are using the somewhat standard square bracket notation for a face of a $3$-polytope. Of the $x-y-2$ vertices
\begin{equation}
	\label{eqn:XY2}
V(S(x,y))=\{v_1,\dots,v_x,w_1,\dots,w_{y-2}\},
\end{equation}
one has degree $x$, one $y$, and the remaining $3$. All faces other than $X,Y$ are triangular. Since no diagonal of $X,Y$ may be an edge, we have
\[\deg(v_{x-1})=\deg(v_x)=3.\]
For the same reason, a vertex may have degree $x$ only if it is on the boundary of $Y$, and it is adjacent to all of $v_1,\dots,v_{x-2}$ and to its two neighbours on $Y$. Since there are only two vertices of degree greater than $3$, another vertex may have degree $y$ only if it is on the boundary of $X$, and it is adjacent to all of $w_1,\dots,w_{y-2}$ and to its two neighbours on $X$. That is to say,
\[\deg(v_{i})=y \qquad \text{and} \qquad \deg(w_j)=x,\]
with $1\leq i\leq x-2$ and $1\leq j\leq y-2$ to be determined. We claim that either $i=1$ and $j=1$, or $i=x-2$ and $j=y-2$. Indeed, otherwise, the partition
\[\{\{v_{x-1},v_i,w_{j}\},\{v_{x},v_{i+1},w_{j-1}\}\}\]
would determine a $K(3,3)$-minor in $S(x,y)$, contradicting planarity (Kuratowski's Theorem). Up to relabelling, we have $i=j=1$. The self-dual polyhedron is uniquely determined, hence there is a unique self-dual realisation for \eqref{eqn:sxy}, as claimed.

We have just completed the first part of the proof of Theorem \ref{thm:1}. We are also ready to prove Corollary \ref{cor}.
\begin{proof}[Proof of Corollary \ref{cor}]
Thanks to Theorem \ref{thm:1}, we only need to study sequences of type \eqref{eqn:sxy}. Each has the unique self-dual realisation $S(x,y)$. Now consider $S(x,y-1)$, $x\geq y\geq 4$, labelled as in \eqref{eqn:XY2} and \eqref{eqn:XY} -- cf. Figure \ref{fig:75}. We define \[Q(x,y):=S(x,y-1)-v_3v_4+u+uv_1+uv_3+uv_4, \qquad x\geq y\geq 4,\]
where $u$ is a new vertex. This operation is an edge-splitting, hence $Q(x,y)$ is another $3$-polytope. The degree sequence of $S(x,y-1)$ is
\[x,y-1,3^{x+(y-1)-4}.\]
with $\deg(v_1)=y-1$. Therefore, the sequence of $Q(x,y)$ is \eqref{eqn:sxy},
\[x,y,3^{x+y-4}.\]
On the other hand, the faces of this new polyhedron are the $x-1$-gon
\[[v_1,u,v_4,v_5,\dots,v_x],\]
the $4$-gons
\[[v_1,v_2,v_3,u] \quad \text{ and } \quad [v_3,u,v_4,w_1],\]
a $y-1$-gon, and the remaining are triangles. In particular, as soon as $x\geq 5$, $Q(x,y)$ is not self-dual, thus $Q(x,y)\not\simeq S(x,y)$. Therefore, \eqref{eqn:sxy} has at least two polyhedral realisations when $x\geq 5$ and $y\geq 4$. On the other hand, pyramids and $S(4,4)\simeq Q(4,4)$ are unigraphic polyhedra.
\end{proof}

\subsection{Non-unigraphicity of $t_1,\dots,t_k,3^m$ when $k\geq 3$ and the $t_i$'s are not all equal}
\label{sec:2p}
We want to show that the degree sequence
\begin{align}
	\label{eqn:s2}
	&\notag t_1,\dots,t_k,3^m, \qquad k\geq 3, 
	\\&\text{ where the } t_i \text{'s are not all equal},
\end{align}
and $m$ is given by \eqref{eqn:m} as usual, has at least two non-isomorphic self-dual realisations.

At the end of this section, we will prove the following. It will require a careful analysis of Algorithm \ref{alg:1}.
\begin{lemma}
	\label{lem:leaf}
Let $G$ be the output of Algorithm \ref{alg:1} when we input the tuple $(t_1,\dots,t_k)$, and $u_1,\dots,u_k$ the vertices of $G$ satisfying
\[\deg(u_i)=t_i, \quad 1\leq i\leq k.\]
Then for each $1\leq i\leq k-1$, the vertex $u_{i}$ is adjacent in $G$ to $u_{i+1}$. Moreover, $u_{i}$ is adjacent to at most one more $u_j$, $j\geq i+2$. If $1\leq i\leq k-2$ and $t_{i+1}=5$, then $u_{i}$ is adjacent to $u_{i+2}$. If $1\leq i\leq k-3$ and $t_{i+1}=t_{i+2}=4$, then $u_{i}$ is adjacent to $u_{i+3}$. In all other cases, $u_i$ is not adjacent to any $u_j$, $j\geq i+2$.
\end{lemma}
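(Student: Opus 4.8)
The plan is to track, step by step, how each application of $\calZ$ (together with the relabellings of Figures \ref{fig:1b} and \ref{fig:1c}) affects the adjacencies among the ``high-degree'' vertices $u_1,\dots,u_k$. First I would set up notation consistent with the canonical labelling \eqref{eqn:can}: in the algorithm, the vertex whose degree is being incremented plays the role of $c$, while $a,b$ are its two ``anchor'' neighbours on the relevant quadrangular face of the radial. Translating back to the primal graph $\Gamma$ via the edge-splitting description \eqref{eqn:split}, each application of $\calZ$ while processing $t_i$ increases $\deg(u_i)$ by one and makes $u_i$ adjacent to the single newly inserted degree-$3$ vertex. The key is to identify, at the moment we start processing $t_{i+1}$ (i.e. after the relabelling of Figure \ref{fig:1c} that closes off $t_i$), which of the already-constructed vertices become the anchors $a,b$ for the first $\calZ$ of the $t_{i+1}$-block, and hence which vertices $u_{i+1}$ is forced to be adjacent to.

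The main structural claim is that $u_i$ and $u_{i+1}$ are always adjacent: this should follow because the relabelling of Figure \ref{fig:1c} leaves the vertex that was $c$ (namely $u_i$) sitting on the face where the next $\calZ$ will act, so that the first edge-split of the $t_{i+1}$-block necessarily creates the edge $u_iu_{i+1}$ (here one must be careful to treat separately the case $t_{i+1}=4$, where processing $t_{i+1}$ consists of ``zero applications of $\calZ$'' in the degree-incrementing sense — but $u_{i+1}$ is still the vertex created as the relabelled $c$, so the adjacency persists). For the ``at most one more'' part and the two exceptional cases, I would examine the local picture after Figure \ref{fig:1b} versus Figure \ref{fig:1c}: when $t_{i+1}=5$ there is exactly one intermediate $\calZ$, and the vertex $u_{i+2}$ inherits an anchor role that makes it adjacent to $u_i$; when $t_{i+1}=t_{i+2}=4$ a short chain of two ``closing'' relabellings propagates the adjacency from $u_i$ to $u_{i+3}$; in every other configuration the relabelling of Figure \ref{fig:1b} (used whenever $t_{i+1}\geq 6$, which happens at least once in the intermediate steps) ``pushes $u_i$ off'' the active face, so no further $u_j$ can become adjacent to it. A clean way to organise this is to define, for each processed index $i$, the pair of anchor vertices $(a_i,b_i)$ active at the start of the $t_i$-block, prove a recursion expressing $(a_{i+1},b_{i+1})$ in terms of $(a_i,b_i)$, $u_i$, and the first degree-$3$ vertex of the block, and then read off all adjacencies of $u_i$ from this recursion together with the observation that $u_i$ gains no new neighbours after the $t_i$-block except possibly as an anchor in the $t_{i+1}$-block (and, in the two exceptional cases, the immediately following one).

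I expect the main obstacle to be bookkeeping the anchor vertices precisely through the two relabelling patterns — in particular handling the boundary/degenerate situations: the very first block $t_1$ (where the anchors come from the initial cube $\{v_1,f_1,\dots,v_4,f_4\}$ rather than from a previous block), blocks with $t_i=4$ (which contribute a relabelling but no degree increment), and runs of consecutive $4$'s, which is exactly where the $u_i\sim u_{i+3}$ exception arises. To keep this manageable I would first carry out the argument in full for the generic sub-case $t_{i+1}\geq 6$ (establishing $u_i\sim u_{i+1}$ and no other $u_j$), then do the two special sub-cases $t_{i+1}=5$ and $t_{i+1}=t_{i+2}=4$ as local modifications of the same diagram-chase, and finally note that these cases are exhaustive because $t_{i+1}\in\{4,5\}$ or $t_{i+1}\geq 6$, with the sub-case $t_{i+1}=4,\ t_{i+2}\geq 5$ reducing to the generic analysis applied one block later. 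Throughout, planarity and $3$-connectivity of the intermediate radials (guaranteed by \cite{mafpo3}) ensure the local pictures are as drawn in Figures \ref{fig:1a}–\ref{fig:1c}, so no global ambiguity arises.
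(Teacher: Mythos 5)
Your overall strategy (tracking the labels $a,b,c$ through the applications of $\calZ$ and the two relabellings, then reading off the adjacencies of the $u_i$) is the same as the paper's, but the central step you rely on is false. An edge-splitting \eqref{eqn:split} — equivalently, $\calZ$ performed on the radial — never creates an edge between two vertices that already exist: it deletes the edge $ab$ and adds only the three edges incident to the new vertex $d$. Hence the first split of the $t_{i+1}$-block cannot ``create the edge $u_iu_{i+1}$''. At that moment $u_{i+1}$ is already in the graph (it is the current $c$, whose degree the split raises from $3$ to $4$ via the new vertex), and it is not ``the vertex created as the relabelled $c$'': in the paper's analysis $u_{i+1}$ is the vertex inserted at the application $\calZ_{t_i-4}$ of the $t_i$-block when $t_i\geq 5$, and a vertex predating the whole block when $t_i=4$. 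What actually has to be shown is that the edge $u_iu_{i+1}$, which exists before the $t_{i+1}$-block starts, is never destroyed by the later splits — a point your outline never addresses, even though every application of $\calZ$ deletes an edge.

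This omission also undermines the remaining clauses. The first $\calZ$ of the $t_{i+1}$-block removes the edge from $u_i(=a)$ to $b$ (the vertex inserted at the last application of the $t_i$-block) and replaces it by the edge to the newly inserted vertex $x_i$; this is precisely how the paper pins down the full neighbourhood of $u_i$ in the output (three vertices predating its block, one vertex from each of $\calZ_1,\dots,\calZ_{t_i-4}$, and $x_i$), and the exceptional adjacencies come from deciding when $x_i$ itself later acquires high degree: $x_i=u_{i+2}$ exactly when $t_{i+1}=5$, $x_i=u_{i+3}$ exactly when $t_{i+1}=t_{i+2}=4$, and $\deg x_i=3$ otherwise. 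Your heuristic that Figure \ref{fig:1b} ``pushes $u_i$ off the active face, so no further $u_j$ can become adjacent to it'' aims at the wrong question: since splits never join pre-existing vertices, $u_i$ cannot gain a neighbour among later-created vertices except the single vertex $x_i$ it touches at the start of the next block; the issue is whether that one neighbour (or a surviving earlier neighbour) ends up with degree $\geq 4$. So as written the proposal neither establishes $u_i\sim u_{i+1}$ correctly nor yields the ``at most one more $u_j$'' and ``in all other cases'' statements; the anchor-recursion you sketch could in principle be completed, but it must incorporate the edge deletions and the correct identification of $u_{i+1}$ and $x_i$, which is exactly the content of the paper's proof.
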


Applying Lemma \ref{lem:leaf}, we will prove that \eqref{eqn:s2} has at least two non-isomorphic self-dual realisations. With no loss of generality, we can assume that
\begin{equation}
\label{eqn:cond}
t_1\neq t_2, \quad t_1\neq 5.
\end{equation}
Call $G_1,G_2$ the graphs constructed by Algorithm \ref{alg:1} on inputting
\[(t_1,t_2,t_3,\dots,t_k)
\quad \text{ and } \quad
(t_2,t_1,t_3,\dots,t_k)\]
respectively. We will now apply Lemma \ref{lem:leaf} to show that $H_+(G_1)\not\simeq H_+(G_2)$ by comparing their end-vertices (i.e., vertices of degree $1$), so that $G_1\not\simeq G_2$, and therefore \eqref{eqn:s2} has at least two self-dual realisations. 

If $k=3$ and $t_2=5$, then $H_+(G_1)$ has no end-vertices, while $H_+(G_2)$ has two (since $t_1\neq 5$). If $k=3$ and $t_2\neq 5$, then the end-vertices of $H_+(G_1)$ are of degrees $t_1$ and $t_3$ in $G_1$, while the end-vertices of $H_+(G_2)$ are of degrees $t_2$ and $t_3$ in $G_2$. In either case, $G_1\not\simeq G_2$.

Now let $k\geq 4$. Unless the value $4$ appears $k-1$ times in the tuple, we can take
\[\quad t_3\neq 4\]
in addition to the conditions \eqref{eqn:cond}. As in Lemma \ref{lem:leaf}, $u_i$ is the label for the vertex of degree $t_i$, $1\leq i\leq k$. Since $t_3\neq 4$, we record that $u_k$ is an end-vertex in $H_+(G_1)$ if and only if it is an end-vertex in $H_+(G_2)$ (still due to Lemma \ref{lem:leaf}). Hence in $H_+(G_2)$, the end-vertices are $u_2$ and possibly $u_k$. On the other hand, in $H_+(G_1)$, only $u_1$ and $u_k$ may be end-vertices. Since $t_1\neq t_2$, we conclude that in this case $G_1\not\simeq G_2$.

Lastly, we take the two $k$-tuples
\[(4,t_2,4,\dots,4)
\quad \text{ and } \quad
(t_2,4,4,\dots,4),\]
with $k\geq 4$. 
If $k=4$, then $H_+(G_1)$ has at least one end-vertex, while $H_+(G_2)$ has none. Now let $k\geq 5$. If $t_2\geq 6$, then $H_+(G_1)$ has one end-vertex, while $H_+(G_2)$ has none. If $k\geq 5$ and $t_2=5$, then $u_2$ i.e., the vertex of degree $5$ in $G_1$ and $G_2$, has respectively degree $3$ and $2$ in $H_+(G_1)$ and $H_+(G_2)$.

We have seen that, in any case, $G_1\not\simeq G_2$. We end this section by proving Lemma \ref{lem:leaf}.

\begin{proof}[Proof of Lemma \ref{lem:leaf}]\
In this proof, we shall always assume that $1\leq i\leq k-1$. 
	Referring to Figure \ref{fig:1c}, after raising the degree of $u_i$ from $t_i-1$ to $t_i$ via the algorithm, we have $a=u_i$ and $c=u_{i+1}$. The remaining graph transformations will not modify the face of the radial graph containing $a,B,c$, hence $u_i,u_{i+1}$ are adjacent in the outputted $3$-polytope.
	
	More generally, the degree of $u_i$ is raised from $3$ to $t_i$ via $t_i-3$ successive applications of $\calZ$. We refer to these, in order, as
	\[\calZ_\ell, \quad 1\leq\ell\leq t_i-3.\]
	Each such application adds a pair of vertices to the graph. In the output $G$, $u_i$ is adjacent:
	\begin{itemize}
	\item
	to three vertices that already belonged to the graph before performing $\calZ_1$;
	\item
	for $1\leq\ell\leq t_i-4$, to one of the two vertices added during the application of $\calZ_\ell$;
	\item
	to one of the two vertices that are added during the first application of $\calZ$ for $t_{i+1}$. We will refer to this vertex as $x_i$.
	\end{itemize}
	
	If $t_i\geq 5$, then $u_{i+1}$ is added to the graph while performing $\calZ_{t_i-4}$. If instead $t_i=4$, then $u_{i+1}$ already belonged to the graph before the application of $\calZ_{1}$.
	
	As for $x_i$, if $1\leq i\leq k-2$ and $t_{i+1}=5$, then $x_{i}=u_{i+2}$. This situation is illustrated in Figure \ref{fig:l5} (cf. Figure \ref{fig:1bc}). If $1\leq i\leq k-3$ and $t_{i+1}=t_{i+2}=4$, then we have $x_i=u_{i+3}$, as in Figure \ref{fig:l4}. In all other cases, $x_i$ has degree $3$ in $G$. Two examples are shown in Figures \ref{fig:l6} and \ref{fig:l7}. In each case, the remaining graph transformations will not modify the face of the radial graph containing $u_{i},x_{i}$, hence these are adjacent in the output.

\begin{figure}[h!]
	\centering
	\begin{subfigure}{0.23\textwidth}
		\centering
		\includegraphics[width=3.5cm,clip=false]{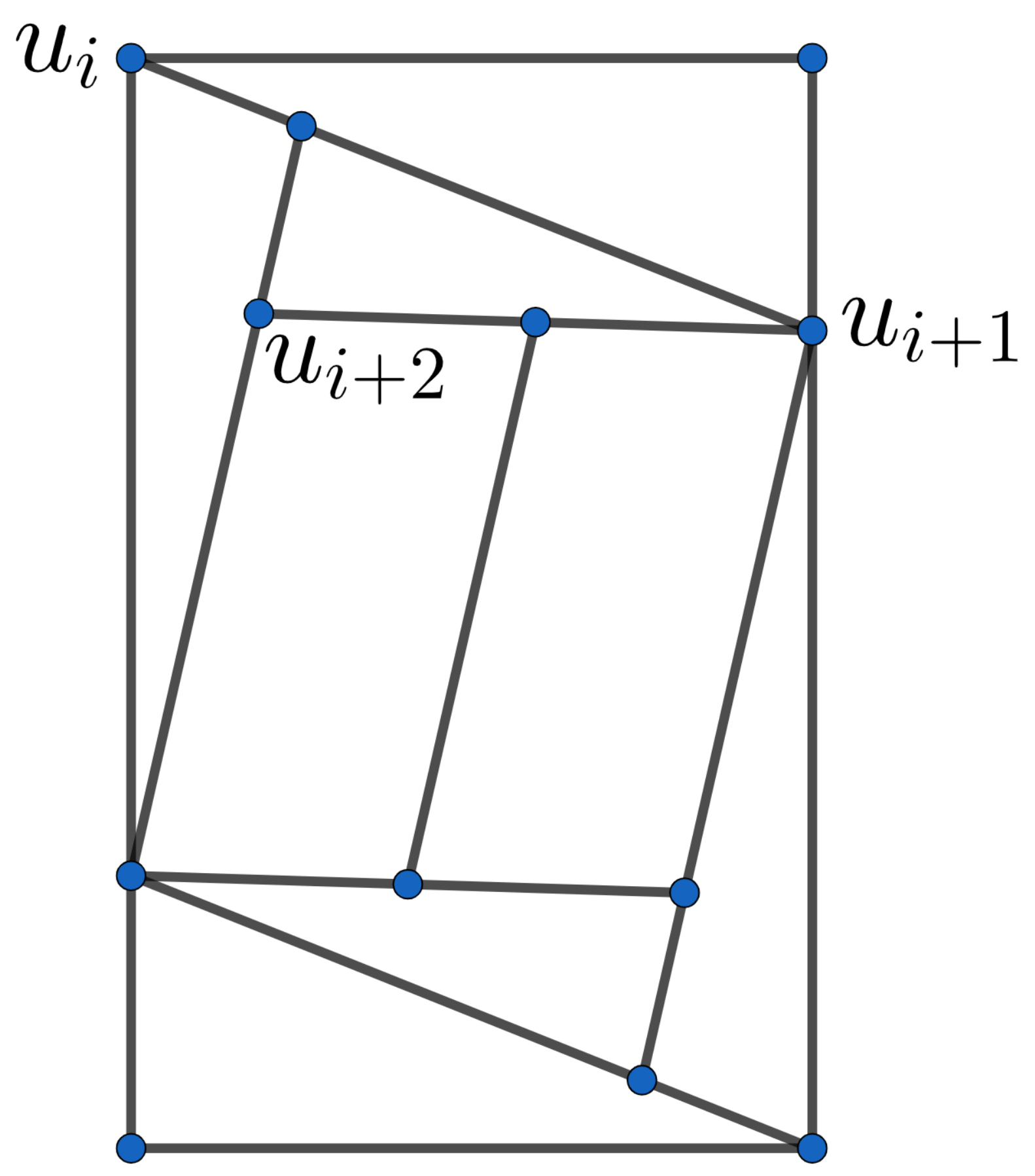}
		\caption{$t_{i+1}=5$.}
		\label{fig:l5}
	\end{subfigure}
	\begin{subfigure}{0.23\textwidth}
		\centering
		\includegraphics[width=3.5cm,clip=false]{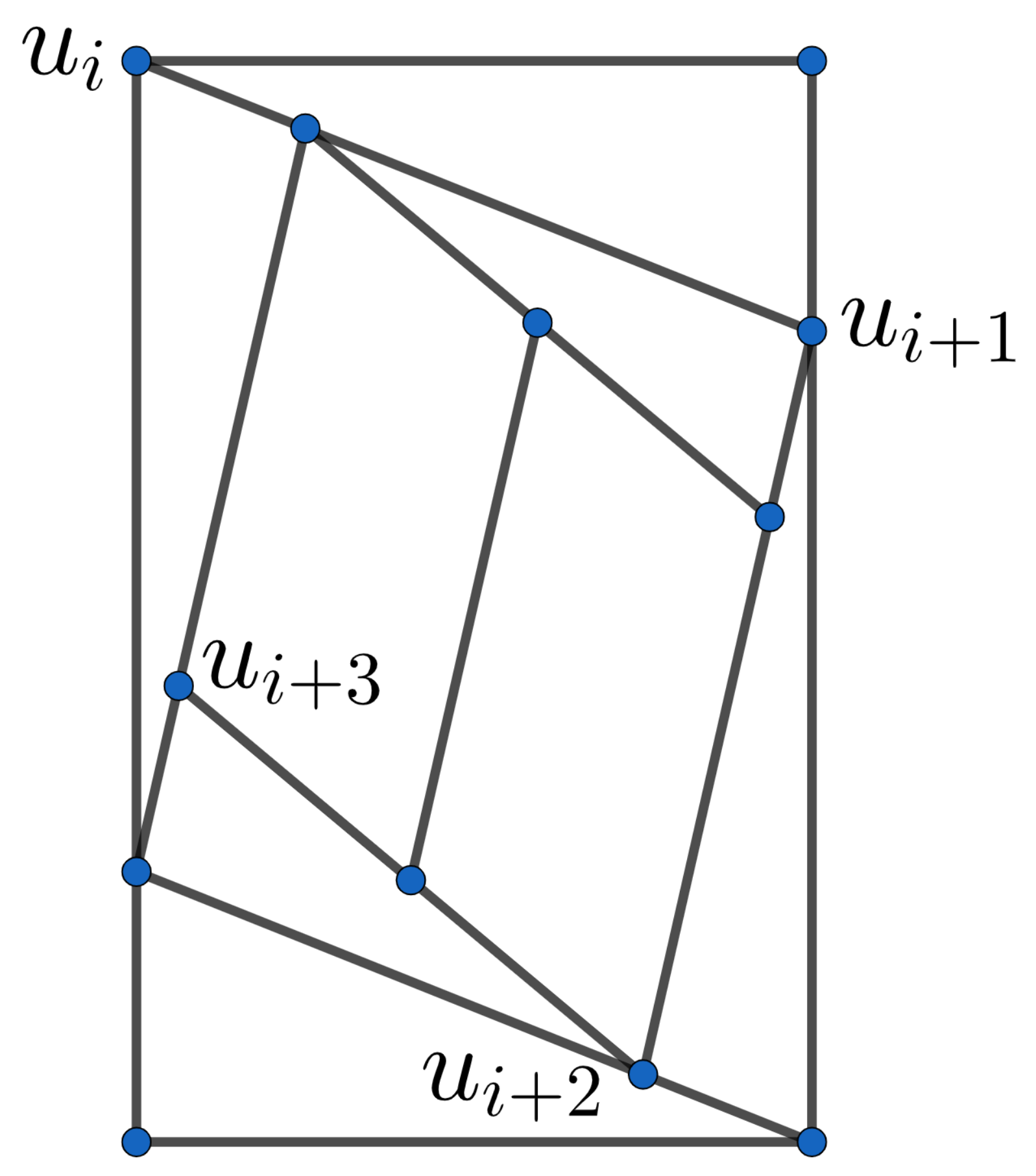}
		\caption{$t_{i+1}=t_{i+2}=4$.}
		\label{fig:l4}
	\end{subfigure}
	\begin{subfigure}{0.23\textwidth}
		\centering
		\includegraphics[width=3.5cm,clip=false]{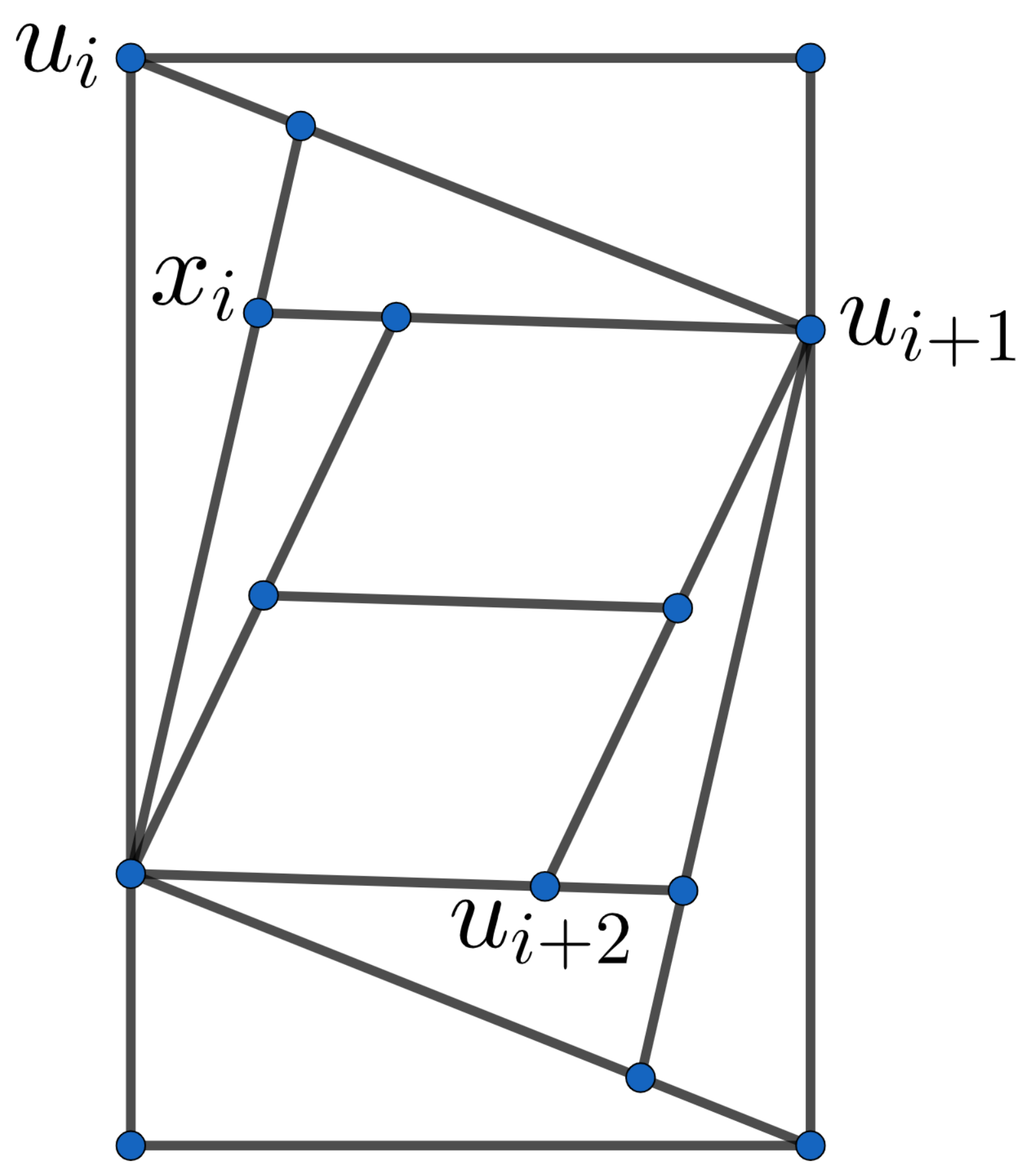}
		\caption{$t_{i+1}=6$.}
		\label{fig:l6}
	\end{subfigure}
	\begin{subfigure}{0.23\textwidth}
		\centering
		\includegraphics[width=3.5cm,clip=false]{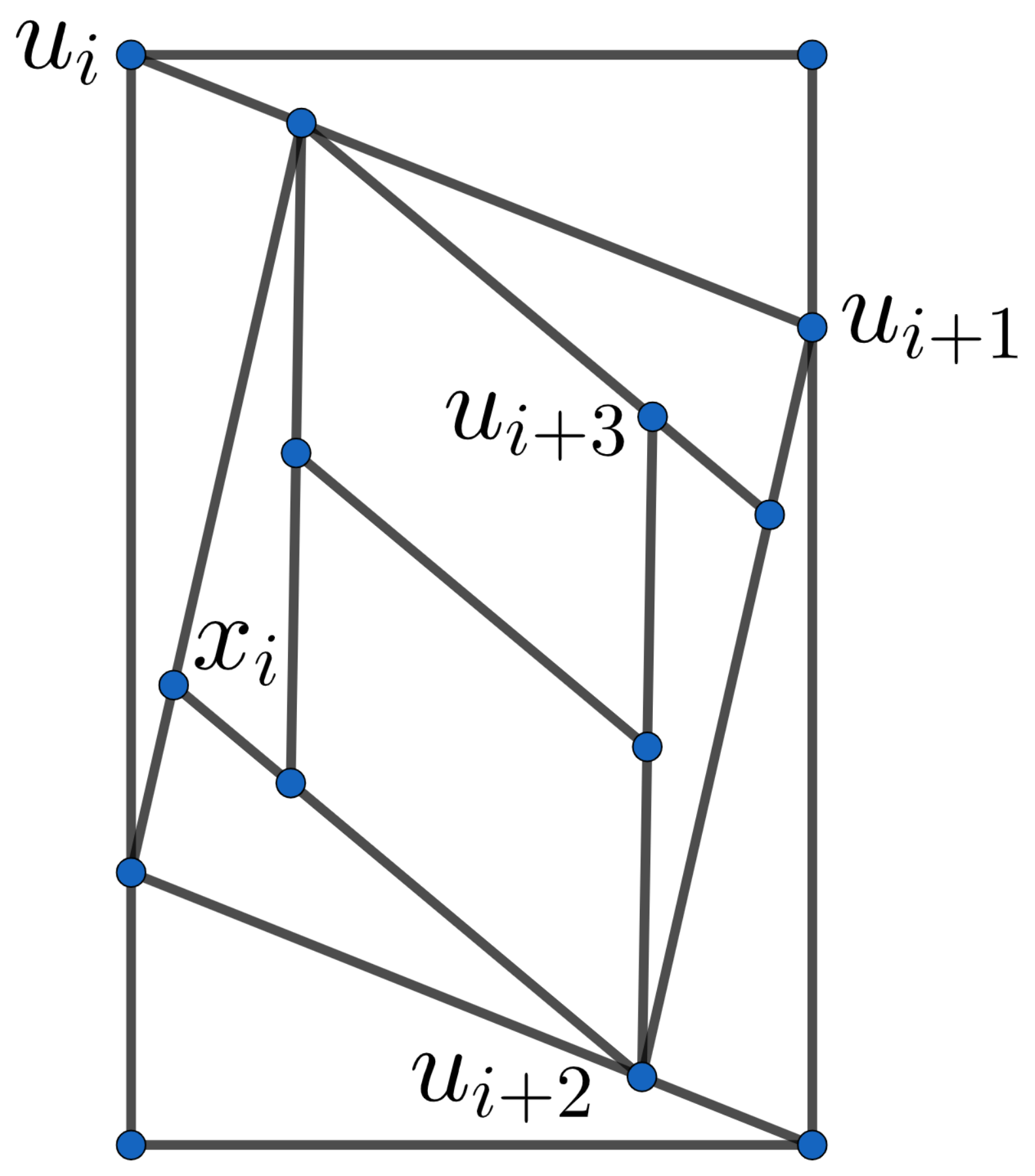}
		\caption{$t_{i+1}=4$, \ $t_{i+2}=5$.}
		\label{fig:l7}
	\end{subfigure}
	\caption{Various situations that can arise for the vertex $x_i$, depending on $t_{i+1},t_{i+2}$.}
	\label{fig:l}
\end{figure}

\end{proof}

\subsection{Non-unigraphicity of $n^k,3^{4+k(n-4)}$ when $n\geq 5$ and $k\geq 3$}
\label{sec:3p}
Here we will see that the degree sequence
\begin{equation}
	\label{eqn:seqa}
n^k,3^{4+k(n-4)}, \qquad n\geq 5, \ k\geq 3
\end{equation}
has at least two non-isomorphic self-dual realisations. One of these is
\[P((\underbrace{n,n,\dots,n}_{k \text{ times}})),\]
obtained via Algorithm \ref{alg:1}. For its implementation, we start with the cube, radial graph of the tetrahedron, and input the constant $k$-tuple $(n,n,\dots,n)$. As new vertices are inserted into the graph, the length of the tuple decreases. When this length has reached $k-2$, the resulting graph is the radial graph of $S(n,n)$ from Definition \ref{def:S}. 
Recall the canonical labelling \eqref{eqn:can}, where we assign increasing indices to newly inserted vertices in turn. Here $k=2$ and $t_1=t_2=n$, hence \[k+m=k+4+(t_1-4)+(t_2-4)=2n-2.\]
The canonical labelling for $R(S(n,n))$ is therefore
\begin{equation*}
	\{v_1,f_1,v_2,f_2,\dots,v_{2n-2},f_{2n-2}\}.
\end{equation*}
The map \eqref{eqn:phi}
\begin{align*}
\notag\varphi: V(S(n,n))&\to V((S(n,n))^*)\\
v_i&\mapsto f_i
\end{align*}
is a graph isomorphism.

We now define a new labelling for $S(n,n)$ as follows. The two $n$-gons are
\[f_1=[v_1,v_3,\dots,v_{2n-3},v_2]\]
and
\[f_{2n-2}=[v_2,v_4,\dots,v_{2n-2},v_{2n-3}].\]
The vertex $v_1$ is adjacent to $v_3$ and to $v_2,v_4,\dots,v_{2n-2}$ in this order around $v_1$ in the planar immersion, and the vertex $v_{2n-2}$ is adjacent to $v_{2n-4}$ and to $v_1,v_3,\dots,v_{2n-3}$ in this order around $v_{2n-2}$ in the planar immersion. The face $f_1$ is adjacent to $f_3$ and $f_2,f_4,\dots,f_{2n-2}$ in this order around $f_1$ in the planar immersion, and the face $f_{2n-2}$ is adjacent to $f_{2n-4}$ and to $f_1,f_3,\dots,f_{2n-3}$ in this order around $f_{2n-2}$ in the planar immersion, as in Figure \ref{fig:green}. With this new labelling, \eqref{eqn:phi} is still an isomorphism.
\begin{figure}[h!]
	\centering
	\includegraphics[width=5.0cm,clip=false]{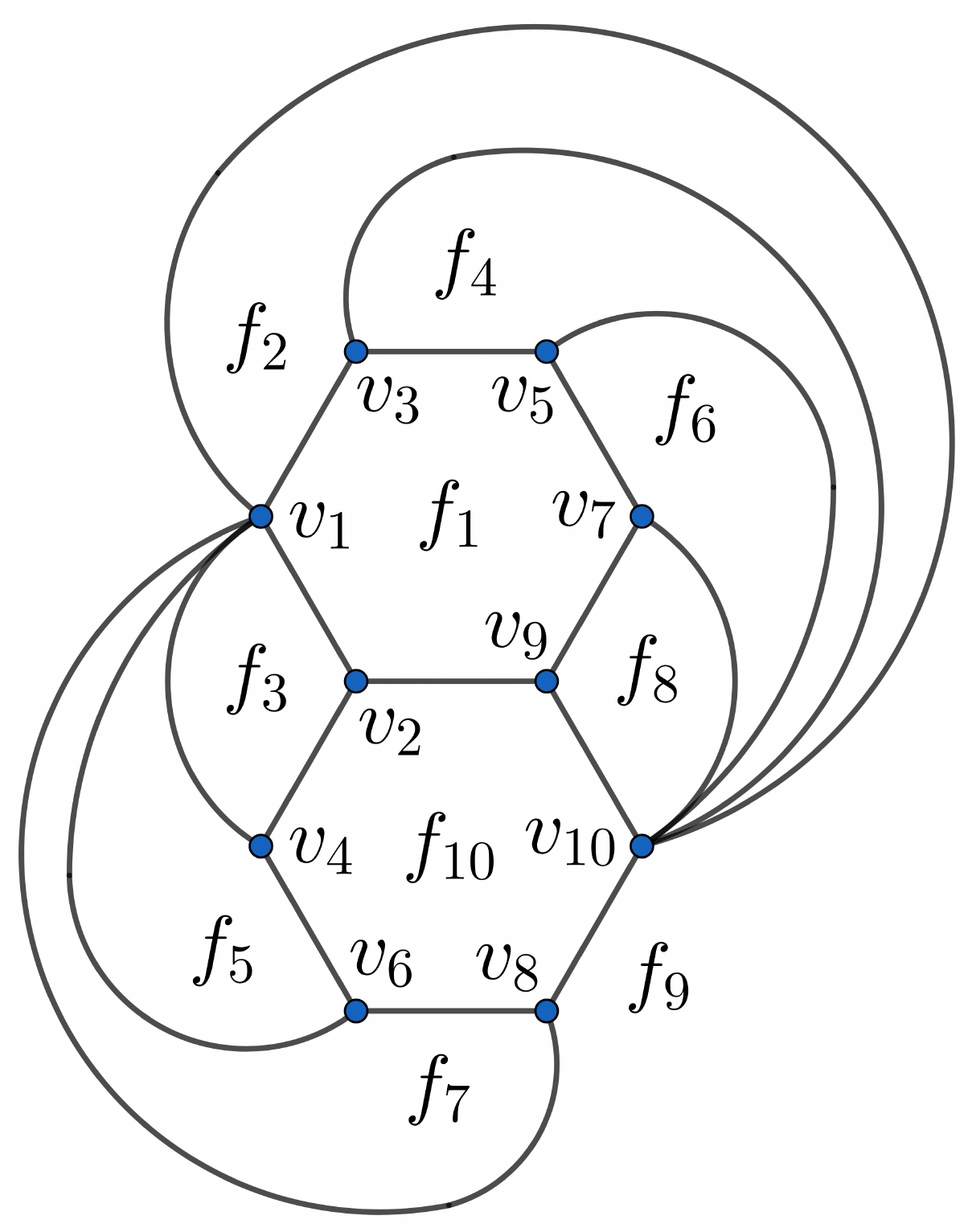}
	\caption{New labelling for $S(6,6)$.}
	\label{fig:green}
\end{figure}

The set of edges in the radial graph $R(S(n,n))$ is given by the cycle
\[v_1,f_2,v_3,\dots, v_{2n-3},f_{2n-2},v_{2n-2},f_{2n-3},\dots,v_2,f_1,v_1,\]
together with
\begin{align*}
	&v_1f_i, \qquad i \text{ odd, } \quad 3\leq i\leq 2n-3;
	\\&f_1v_i, \qquad i \text{ odd, } \quad 3\leq i\leq 2n-3;
	\\&v_{2n-2}f_i, \qquad i \text{ even, } \quad 2\leq i\leq 2n-4;
	\\&f_{2n-2}v_i, \qquad i \text{ even, } \quad 2\leq i\leq 2n-4.
\end{align*}
For the reader's convenience, we have also depicted $R(S(6,6))$ with the new labelling -- Figure \ref{fig:66post}, alongside the old one -- Figure \ref{fig:66pre}, copied from Figure \ref{fig:66}.
\begin{figure}[h!]
	\centering
	\begin{subfigure}{0.45\textwidth}
		\centering
		\includegraphics[width=5.5cm,clip=false]{S66.pdf}
		\caption{Canonical labelling, as in Figure \ref{fig:66}.}
		\label{fig:66pre}
	\end{subfigure}
	\hspace{0.50cm}
	\begin{subfigure}{0.45\textwidth}
		\centering
		\includegraphics[width=5.6cm,clip=false]{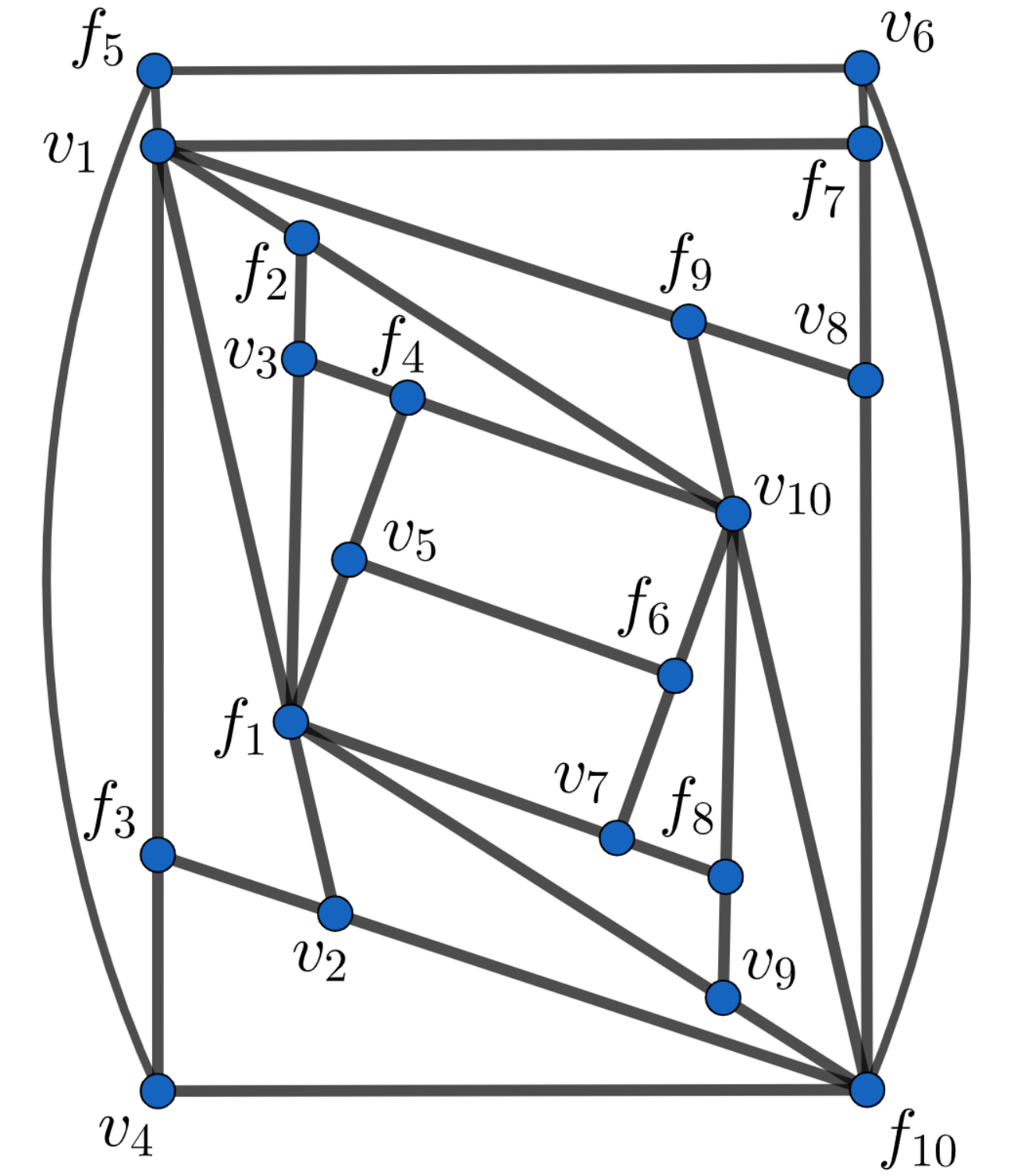}
		\caption{New labelling.}
		\label{fig:66post}
	\end{subfigure}
	\caption{The graph $R(S(6,6))$ with old and new labellings.}
	\label{fig:66new}
\end{figure}

We wish to apply Algorithm \ref{alg:1} starting with the graph $R(S(n,n))$ with its new labelling (instead of the cube). To this end, we assign $a:=v_2$, $A:=f_2$, $b:=v_1$, $B:=f_1$, $c:=v_3$, $C:=f_3$ referring to Figure \ref{fig:1bc}, and input a $k-2$-tuple $(n,n,\dots,n)$. The output is a certain graph $P'(n;k)$.

Our goal for this section is to show that $P'(n;k)$ is a self-dual realisation of \eqref{eqn:seqa}, and moreover
\begin{equation}
	\label{eqn:notiso1}
	P'(n;k)\not\simeq P((\underbrace{n,n,\dots,n}_{k\text{ times}})).
\end{equation}
As proven in \cite[section 3]{mafpo3}, if we apply Algorithm \ref{alg:1} to a self-dual $3$-polytope with a labelling as in Figure \ref{fig:green}, and assigning $a:=v_2$, $A:=f_2$, $b:=v_1$, $B:=f_1$, $c:=v_3$, $C:=f_3$, the output is another self-dual $3$-polytope. We point out that the condition $n\geq 5$ is essential for  $P'(n;k)$ to realise \eqref{eqn:seqa}. Indeed, in this way the degrees of $v_1,f_1$ in the output will be still $n$, as in the initial $S(n,n)$.

To show \eqref{eqn:notiso1}, we inspect the respective subgraphs $H_+(R(\cdot))$ of the LHS and RHS, where $R$ is the radial graph. Analysing the implementation of Algorithm \ref{alg:1}, we claim that in $R(P'(n;k))$, the vertices $v_{2n-2},f_{2n-2}$ have degree $n$, and moreover they are adjacent only to each other and to vertices of degree $3$ in the radial graph. Indeed, apart from $v_{2n-2},f_{2n-2}$ themselves, the vertices of $R(P'(n;k))$ of degree $n$ are $v_1,f_1,v_3,f_3$ (which are not adjacent to $v_{2n-2},f_{2n-2}$ in the radial graph), and possibly others, however these are separated from $v_{2n-2},f_{2n-2}$ by the $6$-cycle $v_1,f_2,v_3,f_1,v_2,f_3$. 
It follows that one of the connected components of
\[H_+(R(P'(n;k)))\]
is a copy of $K_2$.

On the other hand, in 
\begin{equation}
	\label{eqn:cumb}
H_+(R(P((\underbrace{n,n,\dots,n}_{k\text{ times}})))),
\end{equation}
the $2k$ vertices of degree $n$ are
\[v_{3+i},f_{3+i}, \qquad i=0,\ n-3,\ 2(n-3),\dots,\ (k-1)(n-3).\]
This happens because of how Algorithm \ref{alg:1} works: in Figure \ref{fig:1a}, the vertex labelled $c$ increases its degree from $3$ to $n$ via $n-3$ application of $\calZ$, while $2(n-3)$ new vertices are inserted. Hence if some $v_x$ has degree $n$ in the final graph, then the same will be true for $v_{x+(n-3)}$ (unless $v_{x+(n-3)}\geq p-1$, where $p=k+4+k(n-4)$ is the total number of vertices in \eqref{eqn:seqa}).

Further, for each $1\leq j\leq k-1$, the vertex $v_{3+(j-1)(n-3)}$ is adjacent to $f_{3+j(n-3)}$ (and similarly $f_{3+(j-1)(n-3)}$ to $v_{3+j(n-3)}$). This happens for the following reasons: with the change of labelling in Figure \ref{fig:1c}, the next vertices after $a,A$ to increase their degree with Algorithm \ref{alg:1} will be $c,C$; moreover, $a,C$ are adjacent, and will remain adjacent in the radial graph of the final outputted graph.

In particular, since $k\geq 3$, each vertex of degree $n$ in \eqref{eqn:cumb} lies on a path of length at least $3$ in \eqref{eqn:cumb}. Hence no connected component of the graph \eqref{eqn:cumb} is isomorphic to $K_2$. We have managed to prove \eqref{eqn:notiso1}.

\subsection{Non-unigraphicity of $4^{k},3^4$ when $k\geq 3$}
\label{sec:4p}
To complete the proof of Theorem \ref{thm:1}, it remains to show that the degree sequence
\begin{equation}
	\label{eqn:s4}
	4^{k},3^4, \qquad k\geq 3
\end{equation}
has at least two non-isomorphic self-dual realisations. We will need the polyhedron $S(4,4)$ together with the vertex and face labellings given in Figure \ref{fig:44}, and also the following procedure.

\begin{alg}
	\label{alg:2}
	\
	
	\noindent\textbf{Input.} A polyhedral graph together with vertex and face labellings
	\[{v_1,v_2,\dots,v_p}, \qquad {f_1,f_2,\dots,f_r},\]
	such that $v_{p-1}v_p$ is an edge, and $v_{p-2}$ a vertex on the same face.
	
	\noindent\textbf{Output.} A plane graph together with vertex and region labellings
	\[{v_1,v_2,\dots,v_p,v_{p+1}}, \qquad {g_1,g_2,\dots,g_r,g_{r+1}}.\]
	
	\noindent\textbf{Description.} We remove the edge $v_{p-1}v_p$, and add an extra vertex $v_{p+1}$, together with new edges
	\[v_{p-2}v_{p+1}, \ v_{p-1}v_{p+1}, \ v_{p}v_{p+1}\]
	(this operation is an edge-splitting). Then, in the newly obtained plane graph, we relabel the regions as
	\[g_i:=f_{i-1}, \quad 3\leq i\leq p+1,\]
	$g_1:=[v_{p-1},v_p,v_{p+1}]$, and $g_2:=[v_{p-2},v_p,v_{p+1}]$.
\end{alg}

We apply Algorithm \ref{alg:2} repeatedly to $S(4,4)=:G_6$, letting
\[G_p, \quad p\geq 7, \quad p=k+4\]
be the resulting order $p$ graph, obtained after $p-6$ implementations. The first few iterations are depicted in Figure \ref{fig:sec4}.
\begin{figure}[h!]
	\centering
	\begin{subfigure}[m]{0.44\textwidth}
		\centering
		\includegraphics[width=5.5cm,clip=false]{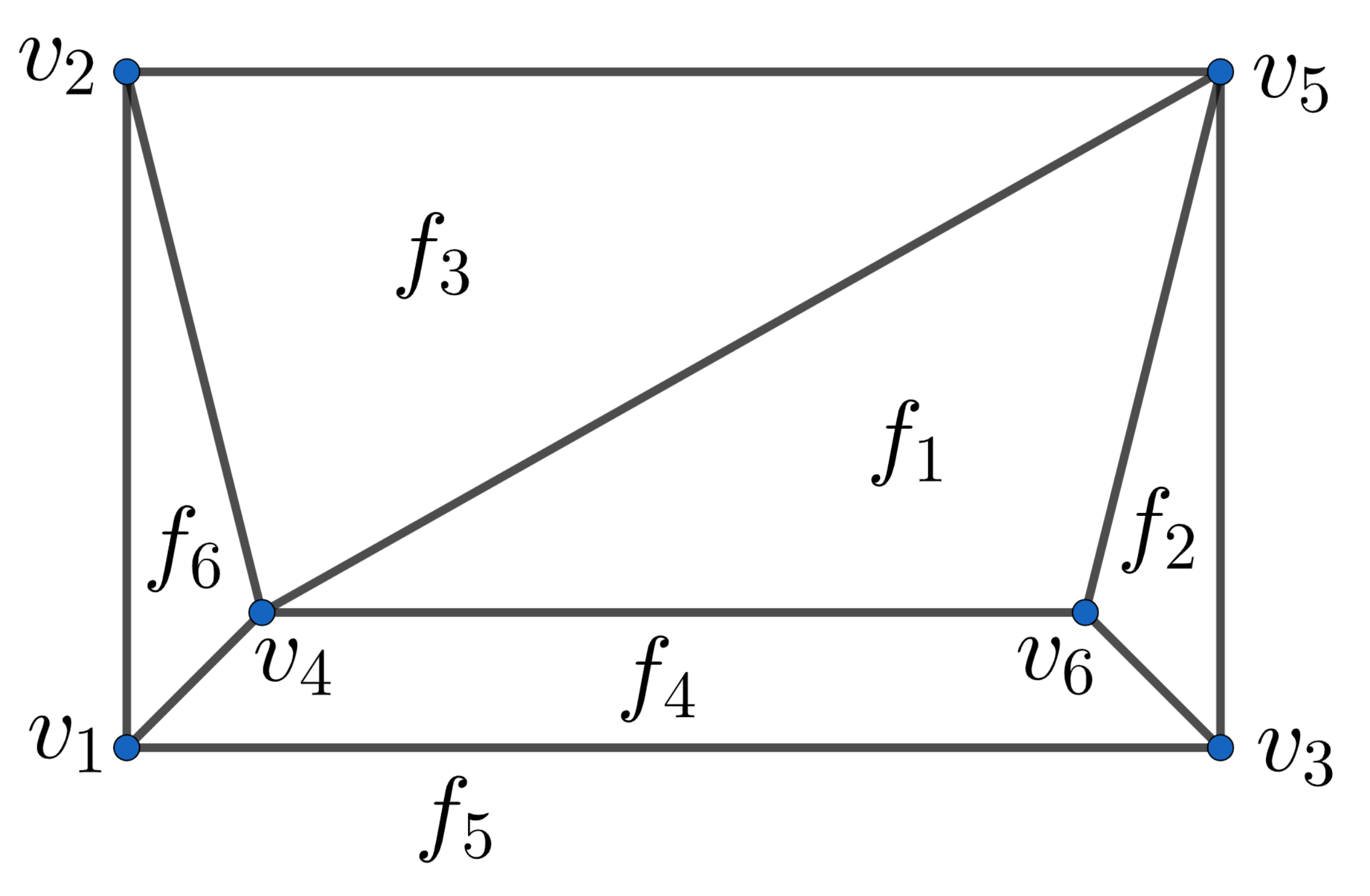}
		\caption{The initial $S(4,4)=G_6$.}
		\label{fig:G6}
	\end{subfigure}
	\hspace{0.5cm}
	\begin{subfigure}[m]{0.44\textwidth}
		\centering
		\includegraphics[width=5.5cm,clip=false]{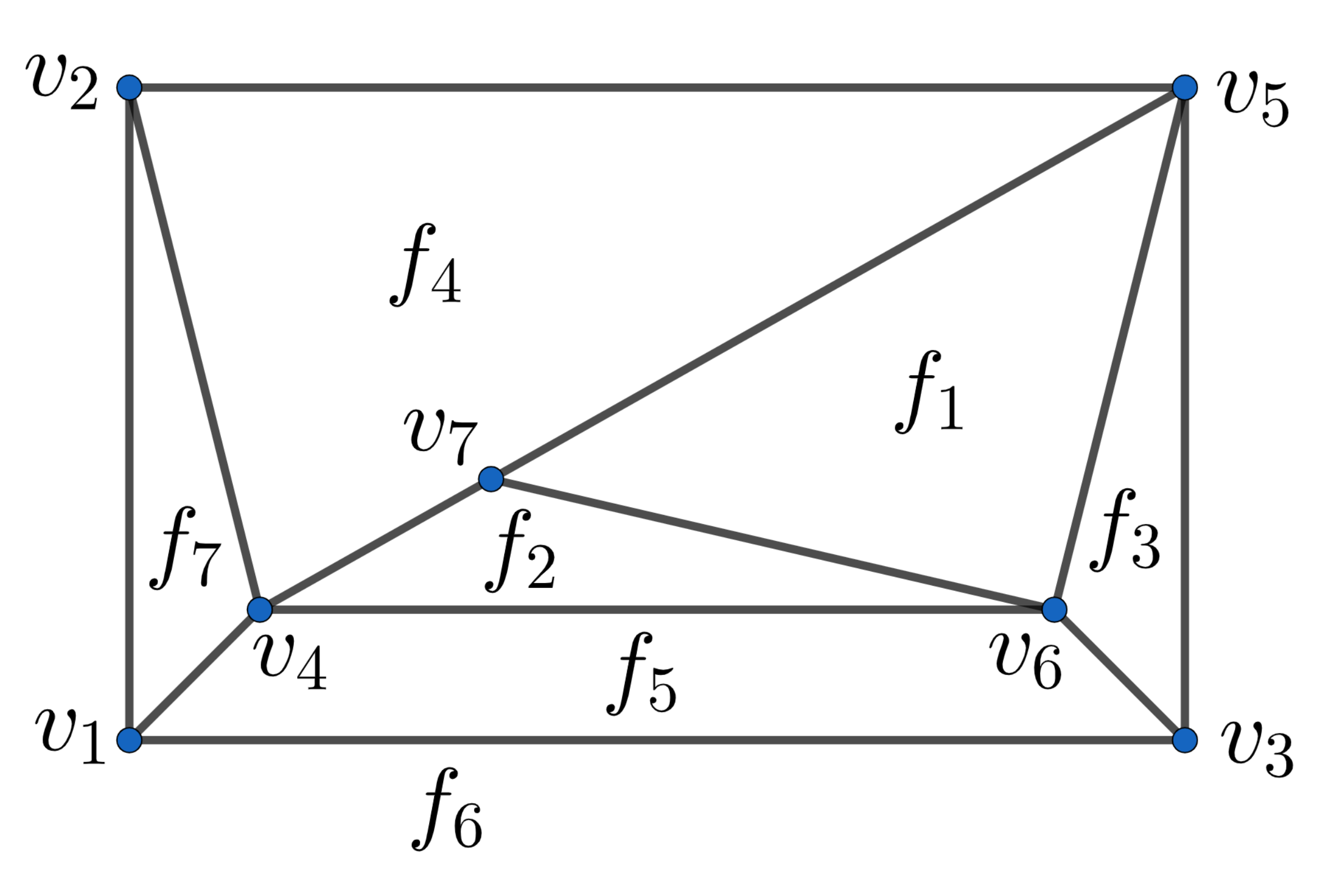}
		\caption{The graph $G_7$.}
		\label{fig:G7}
	\end{subfigure}
	\hspace{0.5cm}
	\begin{subfigure}[m]{0.44\textwidth}
		\centering
		\includegraphics[width=5.5cm,clip=false]{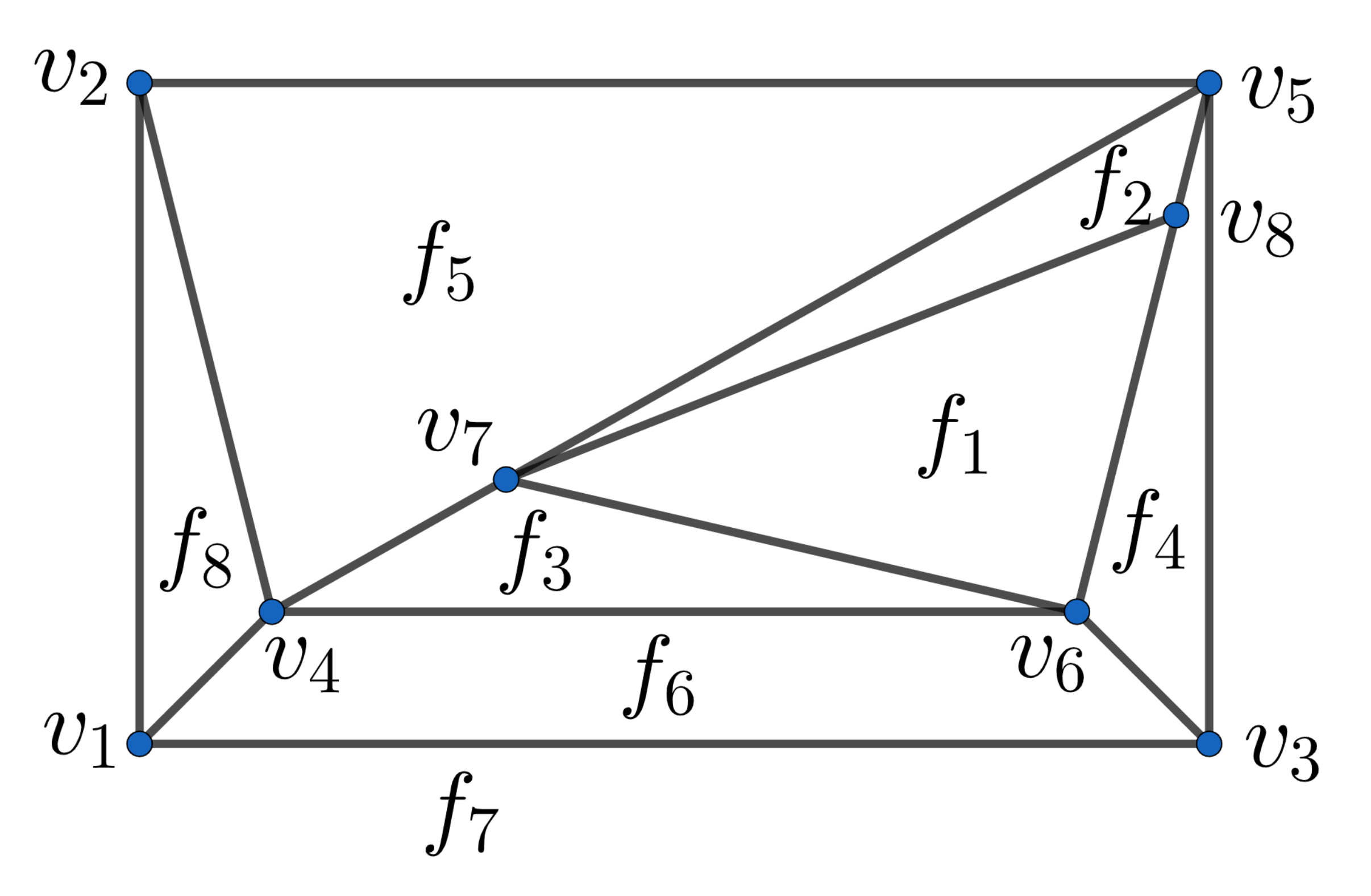}
		\caption{The graph $G_8$.}
		\label{fig:G8}
	\end{subfigure}
	\hspace{0.5cm}
	\begin{subfigure}[m]{0.44\textwidth}
		\centering
		\includegraphics[width=5.5cm,clip=false]{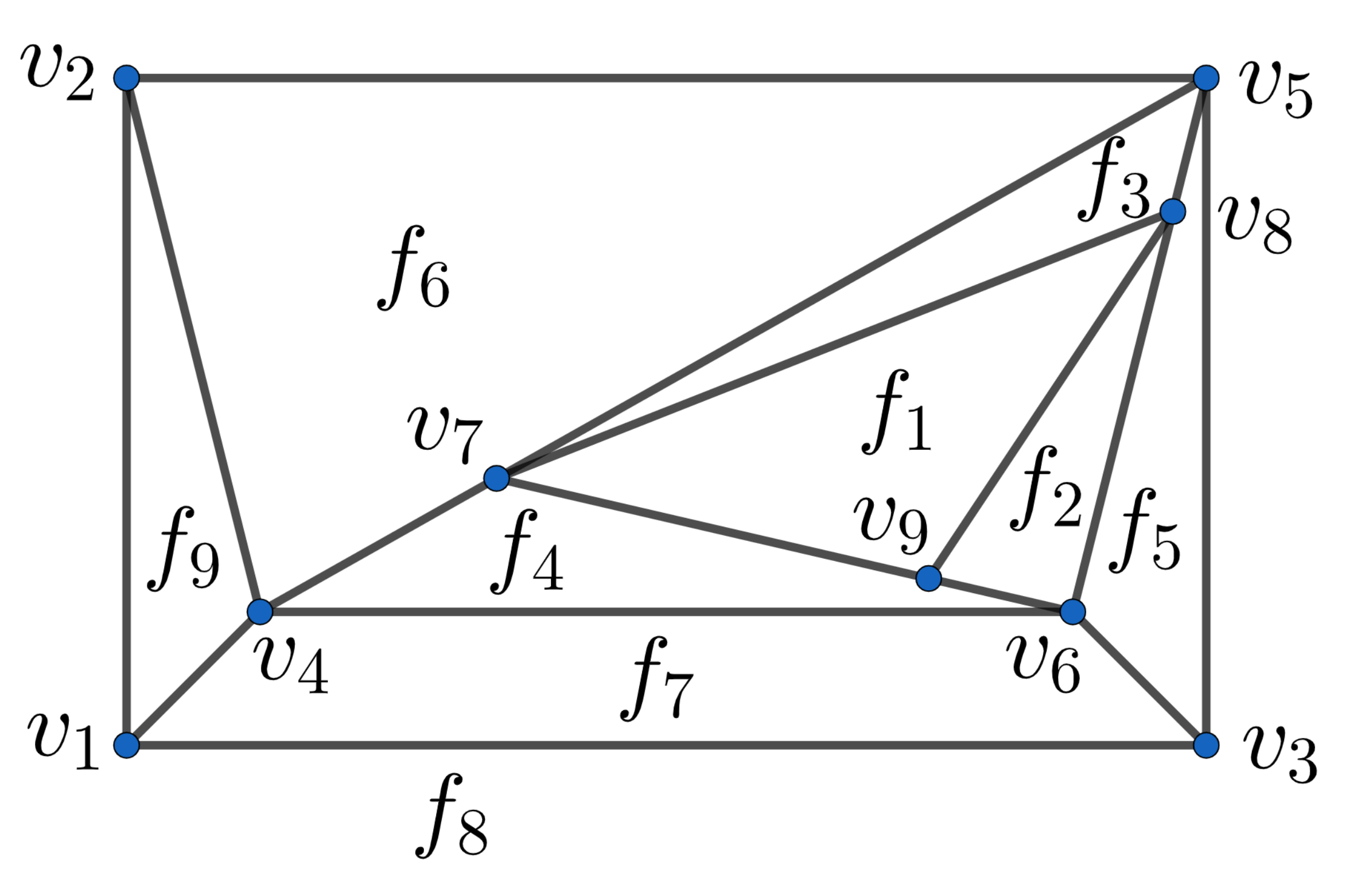}
		\caption{The graph $G_9$.}
		\label{fig:G9}
	\end{subfigure}
	\caption{The initial $S(4,4)$ and the first three iterations of Algorithm \ref{alg:2}.}
	\label{fig:sec4}
\end{figure}

We want to show that for every such $p$, $G_p$ is a self-dual $3$-polytope, of sequence \eqref{eqn:s4}, and moreover
\begin{equation}
	\label{eqn:notiso}
G_p\not\simeq P((\underbrace{4,4,\dots,4}_{p-4 \text{ times}})).
\end{equation}

First, starting with the vertex labelling of Figure \ref{fig:G6}, we have the face $[v_4,v_5,v_6]=f_1$, with $\deg(v_6)=3$. Each implementation of the algorithm increases the degree of $v_p$ by $1$, leaving other degrees unchanged, and also introduces the new $v_{p+1}$ of degree $3$. Further, $[v_{p-1},v_p,v_{p+1}]=g_1$ is a face in the new graph. The vertex/edge additions/deletions of Algorithm \ref{alg:2} amount to an edge-splitting operation on a polyhedron, that always produces another polyhedron. By induction, repeated implementation is thus well defined, and indeed produces $3$-polytopes of sequence \eqref{eqn:s4}.

Second, we will prove self-duality for each $G_p$ via induction. 
For clarity of exposition, we have chosen to sketch the sets of edges, rather than list them. Say that the set of edges of a particular self-dual polyhedron $G_p$ may be partitioned as in Figure \ref{fig:sec4f1}. By duality, the faces with corresponding indices are adjacent -- Figure \ref{fig:sec4f2}. These assumptions clearly hold for the initial $S(4,4)=G_6$. After the application of Algorithm \ref{alg:2} to $G_p$, the vertex relations for $G_{p+1}$ are given in Figure \ref{fig:sec4f3}. The corresponding face relations appear in Figure \ref{fig:sec4f4}, where we have not yet defined $g_1,\dots,g_{p+1}$. The new face $[v_{p-1},v_p,v_{p+1}]$ has been momentarily called $f_0$. 
After relabelling, the face relations for $G_{p+1}$ become those of Figure \ref{fig:sec4f5}. Comparing Figures \ref{fig:sec4f3} and \ref{fig:sec4f5}, we ascertain self-duality for $G_{p+1}$.

\begin{figure}[h!]
	\centering
	\begin{subfigure}[m]{0.99\textwidth}
		\centering
		\includegraphics[width=8.0cm,clip=false]{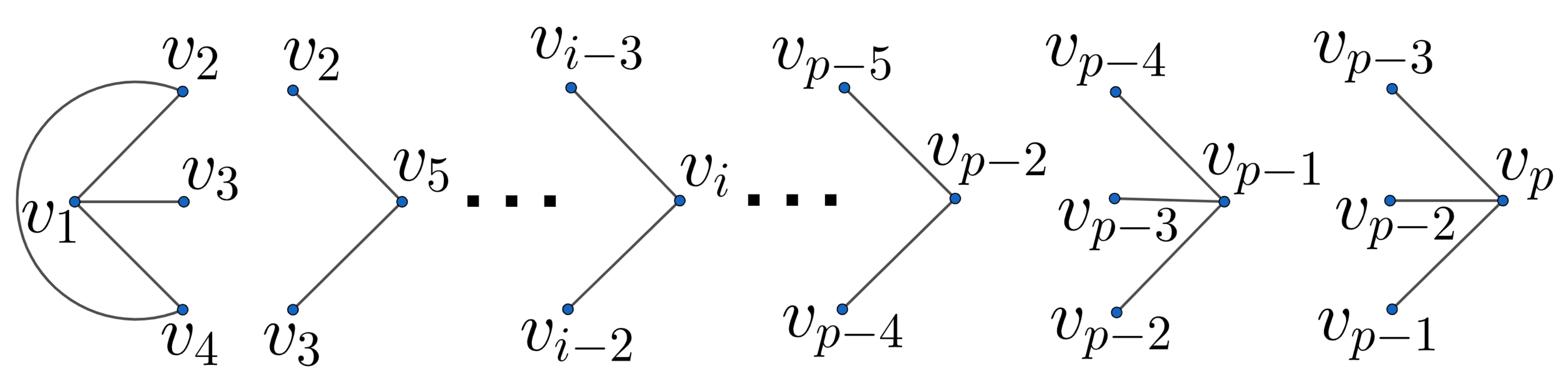}
		\caption{Initial vertex adjacencies for $G_p$.}
		\label{fig:sec4f1}
		\vspace{0.5cm}
	\end{subfigure}
	\begin{subfigure}[m]{0.99\textwidth}
		\centering
		\includegraphics[width=8.0cm,clip=false]{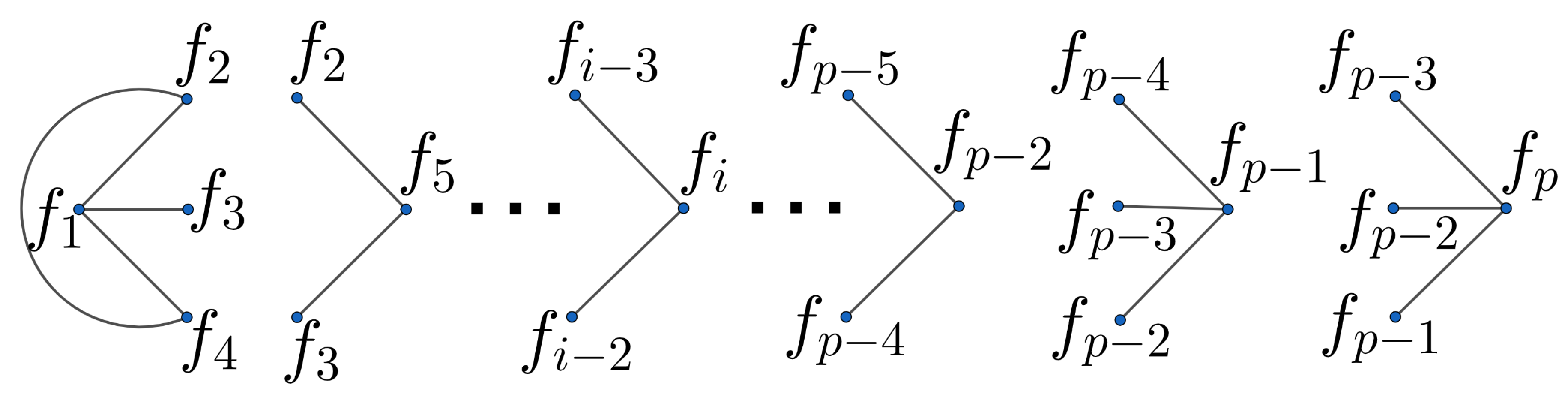}
		\caption{Initial face adjacencies for $G_p$.}
		\label{fig:sec4f2}
		\vspace{0.5cm}
	\end{subfigure}
	\begin{subfigure}[m]{0.99\textwidth}
		\centering
		\includegraphics[width=9.0cm,clip=false]{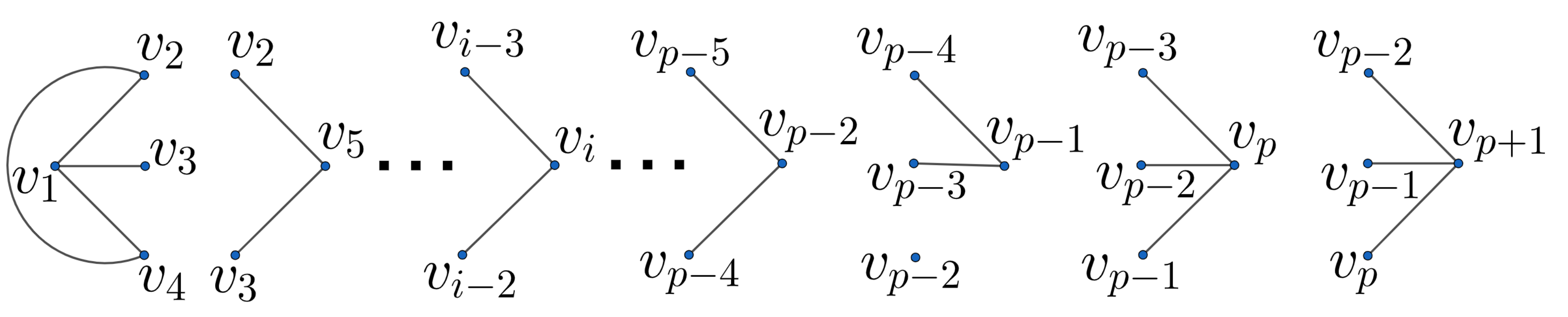}
		\caption{Vertex adjacencies in $G_{p+1}$.}
		\label{fig:sec4f3}
		\vspace{0.5cm}
	\end{subfigure}
	\begin{subfigure}[m]{0.99\textwidth}
		\centering
		\includegraphics[width=8.0cm,clip=false]{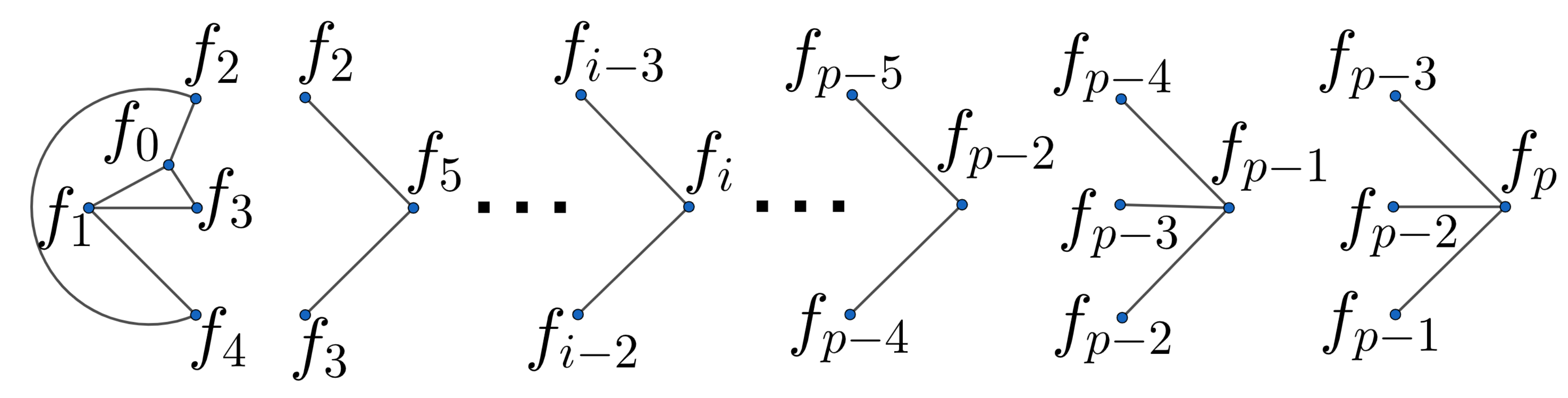}
		\caption{Face adjacencies in the transformed graph, before the relabelling, where $f_0:=[v_{p-1},v_p,v_{p+1}]$. 
		}
		\label{fig:sec4f4}
		\vspace{0.5cm}
	\end{subfigure}
	\begin{subfigure}[m]{0.99\textwidth}
		\centering
		\includegraphics[width=8.0cm,clip=false]{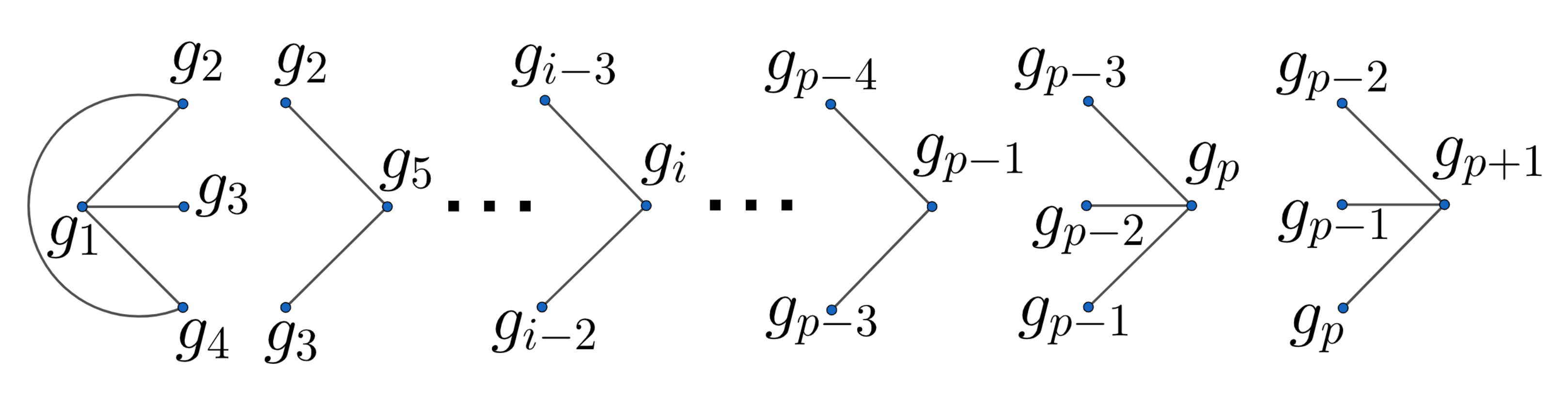}
		\caption{Face adjacencies in $G_{p+1}$ after relabelling.}
		\label{fig:sec4f5}
	\end{subfigure}
	\caption{If $G_p$ is self-dual, then so is $G_{p+1}$.}
	\label{fig:alg2}
\end{figure}

We have managed to prove that $G_p$ is a self-dual $3$-polytope, of sequence \eqref{eqn:s4}. We turn to showing \eqref{eqn:notiso}. Consider the subgraph $H_3(\Gamma)$ generated by the vertices of degree $3$ in a given graph $\Gamma$. By induction, in each $G_p$, the vertices of degree $3$ are $v_1,v_2,v_3,v_p$. Consulting Figure \ref{fig:sec4f1}, we see that
\begin{equation}
\label{eqn:H3Gp}
H_3(G_p)\simeq \calP_3\cup K_1, \quad p\geq 7,
\end{equation}
where $\calP_3$ is the elementary path on three vertices, and $\cup$ disjoint union of graphs.

On the other hand, for $p\geq 7$ we implement Algorithm \ref{alg:1} with input \[(\underbrace{4,4,\dots,4}_{p-4 \text{ times}})\]
on an initial cube. Recall the canonical labelling \eqref{eqn:can}. The vertices of the cube are
\[\{v_1,f_1,v_2,f_2,v_3,f_3,v_4,f_4\},\]
such that, referring to Figure \ref{fig:1a},
\[v_2=a,f_2=A,v_3=c,f_3=C,v_4=b,f_4=B,\]
with $v_1$ being adjacent to $f_1,f_2,f_3$. Algorithm \ref{alg:1} does not alter the adjacencies of $v_1,v_2$, or the face $[v_1,f_1,v_2,f_3]$. Then in the resulting
\begin{equation}
\label{eqn:cumb2}
P((\underbrace{4,4,\dots,4}_{p-4 \text{ times}})),
\end{equation}
$v_1,v_2$ are adjacent and of degree $3$.

Furthermore, in the algorithm, we apply $\calZ$ from Figure \ref{fig:1a} once, relabel as in Figure \ref{fig:1c}, and repeat a total of $p-4$ times. The newly inserted vertices are, in order, called
\[v_i,f_i, \qquad 5\leq i\leq p,\]
where the $v_i$'s are vertices of \eqref{eqn:cumb2}, and the $f_i$'s are vertices of the dual. After the $j$-th application of $\calZ$, the vertices labelled $c,b$ in Figure \ref{fig:1c}, now corresponding to $v_{j+3},v_{j+4}$ respectively, are adjacent and of degree $3$. This is true in particular for the last application of $\calZ$, hence $v_{p-1},v_p$ are adjacent and of degree $3$ in \eqref{eqn:cumb2}.

This is already enough to conclude that
\[H_3(P((\underbrace{4,4,\dots,4}_{p-4 \text{ times}})))\not\simeq \calP_3\cup K_1, \quad p\geq 7,\]
that together with \eqref{eqn:H3Gp} implies \eqref{eqn:notiso}. To be precise, as soon as $p\geq 7$, in the radial graph the vertices
\[v_{p-1},v_p\]
are separated from $v_1,v_2$ by the $6$-cycle
\[v_3,f_5,v_4,f_3,v_5,f_4.\]
Thereby, we actually have
\[H_3(P((\underbrace{4,4,\dots,4}_{p-4 \text{ times}})))\simeq K_2\cup K_2, \quad p\geq 7.\]
The proof of Theorem \ref{thm:1} is now complete.

\paragraph{Discussion and future research.} One feature of Algorithm \ref{alg:1} is that it allows to increase one vertex degree at a time, leaving the rest unchanged, and inserting new vertices only of degree $3$. Moreover, self-dual sequences have the nice property that, once the values greater than $3$ are fixed (i.e., the tuple $T$), the quantity of $3$'s $m(T)$ is determined also \eqref{eqn:m}. These facts make the class of self-dual polyhedra particularly versatile to work with in this context. It would be nice to apply the ideas of this paper to other subclasses of polyhedra, or better yet the full class -- Question \ref{que:gen}.
\\
On a different note, it would be interesting to modify Algorithm \ref{alg:1} to construct not only the self-dual polyhedral graph $P(T)$, but also the corresponding self-dual polyhedral solid embedded in $3$-dimensional space in such a way that it corresponds naturally to its dual e.g., via a polarity.

\paragraph{Statements and Declarations.}
The author has no relevant financial or non-financial interests to disclose. The are no conflicts of interests to disclose. The data produced to motivate this work is available on request.

\paragraph{Acknowledgements.}
The author wishes to thank two anonymous referees for their helpful comments on a previous version.
\\
Riccardo W. Maffucci was partially supported by Programme for Young Researchers `Rita Levi Montalcini' PGR21DPCWZ \textit{Discrete and Probabilistic Methods in Mathematics with Applications}, awarded to Riccardo W. Maffucci.

\bibliographystyle{abbrv}
\bibliography{bibgra}
\end{document}